\documentclass[11pt]{amsart}
\usepackage{amsmath,amsthm}
\usepackage{amstext,amsfonts,bm,amssymb}
\usepackage{graphics,graphicx}
\usepackage{a4wide}
\usepackage{float}
\usepackage{color}
\usepackage{verbatim}

\usepackage{gastex}

\theoremstyle{plain}
\newtheorem{theorem}{Theorem}

\newtheorem{Cor}{Corollary}
\newtheorem{Lem}{Lemma}

\def\fl#1{\left\lfloor#1\right\rfloor}

\def\stif#1#2{\left[#1\atop#2\right]} 
\def\sts#1#2{\left\{#1\atop#2\right\}}
\def\sttf2#1#2{\left[\!\!\left[#1\atop#2\right]\!\!\right]}  
\def\stf3f#1#2{\left[\!\!\left[\!\!\left[#1\atop#2\right]\!\!\right]\!\!\right]} 
\def\stff4#1#2{\left[\!\!\left[\!\!\left[\!\!\left[#1\atop#2\right]\!\!\right]\!\!\right]\!\!\right]}
\def\stss2#1#2{\left\{\!\!\left\{#1\atop#2\right\}\!\!\right\}}


\begin{document}

\title[A $q$-multiple zeta function at roots of unity]{Some explicit values of a $q$-multiple zeta function at roots of unity} 

\author{Takao Komatsu}
\address{Institute of Mathematics\\ Henan Academy of Sciences\\ Zhengzhou 450046\\ China;  \linebreak
Department of Mathematics, Institute of Science Tokyo, 2-12-1 Ookayama, Meguro-ku, Tokyo 152-8551, Japan}
\email{komatsu@hnas.ac.cn}

\date{
}

\begin{abstract}
In this paper, we give the values of a certain kind of $q$-multiple zeta functions at roots of unity. Various multiple zeta values have been proposed and studied by many researchers, but these multiple zeta values naturally arise from generalizations of Stirling numbers. It is interesting, but by no means easy, to show the values explicitly in certain cases. We give explicit formulas by using Bell polynomials, determinants, $r$-Stirling numbers, etc. The techniques used by Bachmann et al. for other multiple zeta values are also of help.  
\medskip

\end{abstract}

\subjclass[2010]{Primary 11M32; Secondary 05A15, 05A19, 05A30, 11B37, 11B73}
\keywords{multiple zeta functions, $q$-Stirling numbers with higher level, $r$-Stirling numbers, Bell polynomials, determinant}

\maketitle

\section{Introduction}\label{sec:1} 

Several different types of $q$-multiple zeta functions have been studied by many researchers (see, e.g., \cite{Bradley,OOZ,Zhao,Zudilin}). In \cite{Schlesinger} the function 
$$
\sum_{1\le i_1<i_2<\dots<i_m}\frac{1}{(1-q^{i_1})^{s_1}(1-q^{i_2})^{s_2}\cdots(1-q^{i_m})^{s_m}}
$$ 
is proposed. In \cite{Ko24}, in the course of studying generalizations of the classically famous Stirling numbers and their transformations, we naturally came to consider the following finite multiple zeta values. This is the case where the multiple zeta values mentioned above are finite and with $s=s_1=s_2=\dots=s_m$. Namely,  
\begin{equation}
\mathfrak Z_n(q;m,s):=\sum_{1\le i_1<i_2<\dots<i_m\le n-1}\frac{1}{(1-q^{i_1})^{s}(1-q^{i_2})^{s}\cdots(1-q^{i_m})^{s}}
\label{def:qmzv}
\end{equation}
is considered.   
By introducing $q$-generalized $(r,s)$-Stirling numbers $\sttf2{n}{k}_q^{(r,s)}$ and applying them to the values of the $q$-multiple zeta functions in (\ref{def:qmzv}), it is shown that 
\begin{equation}
\mathfrak Z_n(\zeta_n;m,1)=\frac{1}{m+1}\binom{n-1}{m} 
\label{eq:zzm1}
\end{equation} 
together with some more specific values.  Here, $\zeta_n=e^{2\pi\sqrt{-1}/n}$ is the $n$-th primitive root of unity.  
It is noticed that from the first identity of Lemma \ref{lem:exp11} below 
$$
\sum_{r\le i_1<i_2<\dots<i_m\le n-1}\frac{1}{(1-q^{i_1})^{s}(1-q^{i_2})^{s}\cdots(1-q^{i_m})^{s}}=\frac{([r-1]_q!)^s}{(1-q)^{s m}([n-1]_q!)^s}\sttf2{n}{m+1}_q^{(r,s)}\,. 
$$ 

In this paper, we shall show more values of $\mathfrak Z_n(\zeta_n;m,s)$ as well as their relations. 
More precisely, the values $\mathfrak Z_n(\zeta_n;m,s)$ are expressed in terms of $\mathfrak Z_n(\zeta_n;1,m s)$ by using the Bell polynomials (Theorem \ref{th:zz-det}) or the determinant (Theorem \ref{th:zz-det2}). Then using the inversion formula, we can conversely express $\mathfrak Z_n(\zeta_n;1,m s)$ in terms of a determinant in terms of $\mathfrak Z_n(\zeta_n;m,s)$. This fact allows us to express the value of $\mathfrak Z_n(\zeta_n;1,s)$ in a concise determinant expression (Corollary \ref{cor:zz-det2}). 

Another aim of this paper is to give the precise expression of $\mathfrak Z_n(\zeta_n;m,2)$ (Theorem \ref{th:s2}). Notice that the expression of $\mathfrak Z_n(\zeta_n;m,1)$ is given as (\ref{eq:zzm1}) in \cite{Ko24}. Though the identity looks simple, the proof is not as simple as all, but using the essential techniques by Bachmann Although the identity is deceptively simple, its proof is not simple at all but makes use of a substantial argument of Bachmann et al. in \cite{BTT18,BTT20,LP19,Tasaka21}. Surprisingly, the identity can also be expressed in terms of $r$-Stirling numbers introduced by Broder (Corollary \ref{cor:s2}).

\section{$q$-generalized $(r,s)$-Stirling numbers}  

Let $[n]_q$ denote the $q$-number, defined by 
$$
[n]_q=\frac{q^n-1}{q-1}\quad(q\ne 1)\,. 
$$  
Its $q$-factorial is given by $[n]_q!=[n]_q[n-1]_q\cdots[1]_q$ with $[0]_q!=1$. 
Let $r$ be a positive integer.    
The $q$-version of $r$-Stirling numbers of the first kind with higher level (level $s$) are denoted by $\sttf2{n}{k}_q^{(r,s)}$, and appear in the coefficients in the expansion 
\begin{equation}  
(x)_{n,q}^{(r,s)}=\sum_{k=0}^n(-1)^{n-k}\sttf2{n}{k}_q^{(r,s)}x^k\,,  
\label{def:qrsth1}  
\end{equation} 
where for $r,s\ge 1$, $(x)_{n,q}^{(r,s)}$ is defined by  
$$
(x)_{n,q}^{(r,s)}:=x^r\prod_{i=r}^{n-1}\bigl(x-([i]_q)^s\bigr)\quad(n>r)
$$ 
with $(x)_{r,q}^{(r,s)}=x^r$. 
When $r=s=1$, $s_q(n,k)=(-1)^{n-k}\sttf2{n}{k}_q^{(1,1)}$ are the signed $q$-Stirling numbers of the first kind (see, e.g., \cite{Ernst}), and $\sttf2{n}{k}_q=\sttf2{n}{k}_q^{(1,1)}$ are the unsigned $q$-Stirling numbers of the first kind. When $r=1$ and $q\to 1$, $\sttf2{n}{k}^{(s)}=\sttf2{n}{k}_1^{(1,s)}$ are the (unsigned) Stirling numbers of the first kind with higher level (\cite{KRV1}). When $r=s=1$ and $q\to 1$, $\stif{n}{k}=\sttf2{n}{k}_1^{(1,1)}$ are the unsigned Stirling numbers of the first kind.

The $q$-version of $r$-Stirling numbers of the second kind with higher level are denoted by $\stss2{n}{k}_q^{(r,s)}$, and appear in the coefficients in the expansion 
\begin{equation}  
x^n=\sum_{k=0}^n\stss2{n}{k}_q^{(r,s)}(x)_{k,q}^{(r,s)}\,.   
\label{def:qrsth2}  
\end{equation} 

When $r=1$ and $q\to 1$, $\stss2{n}{k}^{(s)}=\stss2{n}{k}_q^{(1,s)}$ are the Stirling numbers of the second kind with higher level, studied in \cite{KRV2}. 
When $r=s=1$, $S_q(n,k)=\stss2{n}{k}_q=(-1)^{n-k}\stss2{n}{k}_q^{(1,1)}$ are the signed $q$-Stirling numbers of the second kind (see, e.g., \cite{Ernst}). 
When $r=s=1$ and $q\to 1$, $\sts{n}{k}=\stss2{n}{k}_1^{(1,1)}$ are the classical Stirling numbers of the second kind.

From the definitions in \eqref{def:qrsth1} and \eqref{def:qrsth2}, the following orthogonal relations are yielded. 

\begin{Lem}  
\begin{align} 
\sum_{k=0}^{\max\{n,m\}}(-1)^{n-k}\sttf2{n}{k}_q^{(r,s)}\stss2{k}{m}_q^{(r,s)}&=\delta_{n,m}\,,
\label{ortho1}\\ 
\sum_{k=0}^{\max\{n,m\}}(-1)^{k-m}\stss2{n}{k}_q^{(r,s)}\sttf2{k}{m}_q^{(r,s)}&=\delta_{n,m}\,,
\label{ortho2} 
\end{align}
where $\delta_{n,m}$ is the Kronecker delta. 
\label{lem:ortho}
\end{Lem}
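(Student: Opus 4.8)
The plan is to prove both identities by the standard device of substituting one of the defining expansions \eqref{def:qrsth1}, \eqref{def:qrsth2} into the other and comparing coefficients against a suitable linearly independent family of polynomials. First I would record the elementary vanishing properties that are forced by the definitions. Since $(x)_{n,q}^{(r,s)}$ is divisible by $x^r$ and is monic of degree $n$ for $n\ge r$, reading off coefficients in \eqref{def:qrsth1} gives $\sttf2{n}{k}_q^{(r,s)}=0$ for $k<r$ and for $k>n$, together with $\sttf2{n}{n}_q^{(r,s)}=1$; inspecting \eqref{def:qrsth2} modulo low-degree terms similarly yields $\stss2{n}{k}_q^{(r,s)}=0$ for $k<r$ and $k>n$ and $\stss2{n}{n}_q^{(r,s)}=1$, and shows that \eqref{def:qrsth2} is meaningful precisely when $n\ge r$. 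Finally, the polynomials $(x)_{k,q}^{(r,s)}$ for $k\ge r$ are monic of pairwise distinct degrees $r,r+1,r+2,\dots$, hence linearly independent; in fact $\{(x)_{r,q}^{(r,s)},\dots,(x)_{n,q}^{(r,s)}\}$ and $\{x^r,\dots,x^n\}$ span the same subspace of polynomials for every $n\ge r$.

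For \eqref{ortho1} I would take $n\ge r$, start from \eqref{def:qrsth1}, and expand each monomial $x^k$ occurring there by means of \eqref{def:qrsth2} (legitimate because only indices $k\ge r$ contribute to the sum). Interchanging the two summations produces
$$(x)_{n,q}^{(r,s)}=\sum_{m=r}^{n}\Bigl(\sum_{k=m}^{n}(-1)^{n-k}\sttf2{n}{k}_q^{(r,s)}\stss2{k}{m}_q^{(r,s)}\Bigr)(x)_{m,q}^{(r,s)}\,.$$
Comparing the coefficients of $(x)_{m,q}^{(r,s)}$ on both sides and invoking the linear independence above gives $\sum_{k=m}^{n}(-1)^{n-k}\sttf2{n}{k}_q^{(r,s)}\stss2{k}{m}_q^{(r,s)}=\delta_{n,m}$. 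Extending the index range to $0\le k\le\max\{n,m\}$ changes nothing, since the extra terms have either $k<r\le m$ (so $\stss2{k}{m}_q^{(r,s)}=0$) or $k>n$ (so $\sttf2{n}{k}_q^{(r,s)}=0$); this is exactly \eqref{ortho1}, and the case $n<r$ is trivially $0=0$.

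The proof of \eqref{ortho2} is the mirror image: for $n\ge r$ I would start from \eqref{def:qrsth2}, expand each generalized factorial $(x)_{k,q}^{(r,s)}$ via \eqref{def:qrsth1}, interchange sums to get $x^n=\sum_{m=r}^{n}\bigl(\sum_{k=m}^{n}(-1)^{k-m}\stss2{n}{k}_q^{(r,s)}\sttf2{k}{m}_q^{(r,s)}\bigr)x^m$, and compare coefficients of the monomials $x^m$ to obtain $\sum_{k=m}^{n}(-1)^{k-m}\stss2{n}{k}_q^{(r,s)}\sttf2{k}{m}_q^{(r,s)}=\delta_{n,m}$; the extension of the summation range is justified by the same vanishing. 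There is no conceptual obstacle here. The only point that genuinely requires care is the bookkeeping forced by the factor $x^r$ in $(x)_{n,q}^{(r,s)}$: one must keep track of the fact that both triangular arrays are supported on $r\le k\le n$ and that \eqref{def:qrsth2} holds only for $n\ge r$, because it is precisely this that makes the interchange of summations and the widening of the index range in the final formulas legitimate.
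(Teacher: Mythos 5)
Your argument is correct and is exactly the standard substitution-and-coefficient-comparison derivation that the paper intends: the paper offers no written proof, merely asserting that the relations follow "from the definitions in \eqref{def:qrsth1} and \eqref{def:qrsth2}." Your added care about the support $r\le k\le n$, the linear independence of the monic polynomials $(x)_{k,q}^{(r,s)}$ of distinct degrees, and the widening of the summation range is a welcome tightening of what the paper leaves implicit.
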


\subsection{First kind} 
  
The recurrence relation is given by 
\begin{equation}  
\sttf2{n}{k}_q^{(r,s)}=\sttf2{n-1}{k-1}_q^{(r,s)}+\bigl([n-1]_q\bigr)^s\sttf2{n-1}{k}_q^{(r,s)}
\label{rec:qsth1} 
\end{equation}
with 
\begin{align*}  
&\sttf2{n}{k}_q^{(r,s)}=0\quad(0\le k\le r,\, n\ge k),\\ 
&\sttf2{n}{k}_q^{(r,s)}=0\quad(n<k),\quad \sttf2{0}{0}_q^{(r,s)}=1\,. 
\end{align*}

From the recurrence relation \eqref{rec:qsth1}, we can see some initial values:  
\begin{align*}  
&\sttf2{n}{r}_q^{(r,s)}=\left(\frac{[n-1]_q!}{[r-1]_q!}\right)^s,\quad \sttf2{n}{r+1}_q^{(r,s)}=\left(\frac{[n-1]_q!}{[r-1]_q!}\right)^s\sum_{j=r}^{n-1}\frac{1}{\bigl([j]_q\bigr)^{s}}\,,\\
&\sttf2{n}{r+2}_q^{(r,s)}=\frac{1}{2}\left(\frac{[n-1]_q!}{[r-1]_q!}\right)^s\left(\left(\sum_{j=r}^{n-1}\frac{1}{\bigl([j]_q\bigr)^{s}}\right)^2-\sum_{j=r}^{n-1}\frac{1}{\bigl([j]_q\bigr)^{2 s}}\right)\\ 
&=\left(\frac{[n-1]_q!}{[r-1]_q!}\right)^s\sum_{r\le i<j\le n-1}\frac{1}{\bigl([i]_q[j]_q\bigr)^s}\,,\\
&\sttf2{n}{n}_q^{(r,s)}=1,\quad \sttf2{n}{n-1}_q^{(r,s)}=\sum_{j=r}^{n-1}\bigl([j]_q\bigr)^s,\quad \sttf2{n}{n-2}_q^{(r,s)}=\sum_{r\le i<j\le n-1}\bigl([i]_q[j]_q\bigr)^s\,.
\end{align*} 
\bigskip

In general, by using the recurrence relation (\ref{rec:qsth1}), we have expressions with combinatorial summations (\cite{Ko24}).  

\begin{Lem}  
For $r\le m\le n-1$ and $r\ge 1$, we have 
$$ 
\sttf2{n}{m}_q^{(r,s)}=\left(\frac{[n-1]_q!}{[r-1]_q!}\right)^s\sum_{r\le i_1<\dots<i_{m-r}\le n-1}\frac{1}{\bigl([i_1]_q\dots[i_{m-r}]_q\bigr)^s}\,.
$$ 
For $n-m\ge r$ and $r\ge 1$, we have 
\begin{align*}
\sttf2{n}{n-m}_q^{(r,s)}&=\sum_{r\le i_1<\dots<i_m\le n-1}\bigl([i_1]_q\dots[i_m]_q\bigr)^s\\
&=\sum_{r\le i_1\le i_2\le\dots\le i_m\le n-m}([i_1]_q[i_2+1]_q\cdots[i_m+m-1]_q)^s\\
&=\sum_{i_m=r}^{n-m}([i_m+m-1]_q)^s\sum_{i_{m-1}=r}^{i_m}([i_{m-1}+m-2]_q)^s\cdots\sum_{i_{2}=r}^{i_3}([i_{2}+1]_q)^s\sum_{i_{1}=r}^{i_2}([i_{1}]_q)^s\,.
\end{align*}
\label{lem:exp11} 
\end{Lem}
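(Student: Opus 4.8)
\textbf{Proof proposal for Lemma \ref{lem:exp11}.}

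The plan is to prove all the stated identities by induction on $m$ (equivalently on $n-k$), using the recurrence relation \eqref{rec:qsth1}. For the first identity, concerning $\sttf2{n}{m}_q^{(r,s)}$ with $r\le m\le n-1$, I would argue by induction on $m-r$. The base case $m=r$ is the already-recorded initial value $\sttf2{n}{r}_q^{(r,s)}=([n-1]_q!/[r-1]_q!)^s$, which matches the claimed sum with an empty index set. For the inductive step, apply \eqref{rec:qsth1} to get $\sttf2{n}{m}_q^{(r,s)}=\sttf2{n-1}{m-1}_q^{(r,s)}+([n-1]_q)^s\sttf2{n-1}{m}_q^{(r,s)}$, invoke the inductive hypothesis on both terms, factor out $([n-2]_q!/[r-1]_q!)^s$, and observe that the combinatorial sums split exactly according to whether or not the largest index $i_{m-r}$ equals $n-1$: the term $\sttf2{n-1}{m-1}_q^{(r,s)}$ supplies the subsets in which $i_{m-r}=n-1$ (after multiplying by $([n-1]_q/[n-1]_q)^s=1$ and reindexing), while $([n-1]_q)^s\sttf2{n-1}{m}_q^{(r,s)}$, divided and multiplied appropriately by $([n-1]_q)^s$, accounts for the subsets contained in $\{r,\dots,n-2\}$. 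Collecting the factor $([n-1]_q!/[r-1]_q!)^s$ yields the claimed formula.

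For the second group of identities, concerning $\sttf2{n}{n-m}_q^{(r,s)}$ with $n-m\ge r$, I would again induct, this time on $m$. The first equality $\sttf2{n}{n-m}_q^{(r,s)}=\sum_{r\le i_1<\dots<i_m\le n-1}([i_1]_q\cdots[i_m]_q)^s$ is proved exactly as above from \eqref{rec:qsth1}: the $\sttf2{n-1}{n-1-m}_q^{(r,s)}$ term gives the tuples avoiding $n-1$, and the $([n-1]_q)^s\sttf2{n-1}{n-m}_q^{(r,s)}$ term gives those containing $n-1$ as the top index. The second equality, rewriting the strictly increasing sum $\sum_{r\le i_1<\dots<i_m\le n-1}$ as a weakly increasing sum $\sum_{r\le i_1\le\dots\le i_m\le n-m}$ with shifted arguments $[i_j+j-1]_q$, is the standard bijection $i_j\mapsto i_j-(j-1)$ between strictly increasing $m$-tuples in $\{r,\dots,n-1\}$ and weakly increasing $m$-tuples in $\{r,\dots,n-m\}$; one just checks that the bounds transform correctly and that the factor $[i_j]_q$ becomes $[i_j+j-1]_q$ under the inverse map. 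The third equality is merely the iterated-sum (nested) rewriting of the weakly increasing multiple sum, peeling off the innermost variable $i_1$ first, which is a notational rearrangement requiring no further argument.

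I expect the main obstacle to be purely bookkeeping: keeping the powers of $([n-1]_q!/[r-1]_q!)^s$ versus $([n-2]_q!/[r-1]_q!)^s$ aligned when combining the two terms from the recurrence, and verifying that the index ranges in the strictly-increasing-to-weakly-increasing substitution close up correctly at the boundary (in particular that $i_m\le n-1$ becomes $i_m\le n-m$ and that the shifts $[i_j+j-1]_q$ never exceed $[n-1]_q$). None of this is conceptually deep, but it is where an error would most plausibly creep in; the substantive content is entirely carried by the single recurrence \eqref{rec:qsth1} and the elementary reindexing bijection. Since the identities are already attributed to \cite{Ko24}, I would keep the exposition brief, presenting the inductive step for the first identity in full and noting that the others follow by the same split together with the reindexing.
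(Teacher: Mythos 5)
Your proposal is correct and follows exactly the route the paper intends: the paper gives no written proof, merely stating that the identities follow from the recurrence \eqref{rec:qsth1} (citing \cite{Ko24}), and your induction---splitting the combinatorial sum according to whether the top index equals $n-1$, then applying the shift bijection $i_j\mapsto i_j-(j-1)$ for the weakly increasing form---is precisely that argument carried out. The bookkeeping you flag (matching $([n-1]_q!/[r-1]_q!)^s$ against $([n-2]_q!/[r-1]_q!)^s$ and the boundary $i_m\le n-m$) checks out as you describe.
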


\subsection{Second kind}   

The recurrence relation is given by 
\begin{equation}  
\stss2{n}{k}_q^{(r,s)}=\stss2{n-1}{k-1}_q^{(r,s)}+\bigl([k]_q\bigr)^s\stss2{n-1}{k}_q^{(r,s)}
\label{rec:qsth2} 
\end{equation}
with 
\begin{align*}  
&\stss2{n}{k}_q^{(r,s)}=0\quad(0\le k\le r-1,~n\ge k),\quad \stss2{0}{0}_q^{(r,s)}=1,\\ 
&\stss2{n}{k}_q^{(r,s)}=0\quad(n\le k)\,. 
\end{align*}

From the recurrence relation \eqref{rec:qsth2}, we can see some initial values:  
\begin{align*}  
&\stss2{n}{r}_q^{(r,s)}=([r]_q)^{(n-r)s},\quad \stss2{n}{r+1}_q^{(r,s)}=\sum_{i=0}^{n-r-1}\bigl([r+1]_q\bigr)^{(n-r-i-1)s}([r]_q)^{i s}\,,\\
&\stss2{n}{r+2}_q^{(r,s)}=\sum_{j=0}^{n-r-2}\bigl([r+2]_q\bigr)^{(n-r-j-2)s}\sum_{i=0}^j\bigl([r+1]_q\bigr)^{(j-i)s}([r]_q)^{i s}\,,\\
&\stss2{n}{n}_q^{(r,s)}=1,\quad \stss2{n}{n-1}_q^{(r,s)}=\sum_{j=r}^{n-1}\bigl([j]_q\bigr)^s,\quad \stss2{n}{n-2}_q^{(r,s)}=\sum_{r\le i\le j\le n-2}\bigl([i]_q[j]_q\bigr)^s\,.
\end{align*} 

In general, the $q$-Stirling numbers of the second kind with higher level can be expressed in terms of iterated summations.  

\begin{Lem}  
For $r+1\le k\le n$ and $r\ge 1$,  
\begin{multline*}
\stss2{n}{k}_q^{(r,s)}=\sum_{i_{k-r}=0}^{n-k}\bigl([k]_q\bigr)^{(n-k-i_{k-r})s}\sum_{i_{k-r-1}=0}^{i_{k-r}}\bigl([k-1]_q\bigr)^{(i_{k-r}-i_{k-r-1})s}\\
\cdots\sum_{i_2=0}^{i_3}\bigl([r+2]_q\bigr)^{(i_{3}-i_{2})s}\sum_{i_1=0}^{i_2}\bigl([r+1]_q\bigr)^{(i_{2}-i_{1})s}\bigl([r]_q\bigr)^{i_{1} s}\,. 
\end{multline*}
For $n-k\ge r\ge 1$, 
$$
\stss2{n}{n-k}_q^{(r,s)}=\sum_{r\le i_1\le i_2\le\cdots\le i_k\le n-k}([i_1]_q[i_2]_q\cdots[i_k]_q)^s\,. 
$$ 
\label{lem:iterated-sum} 
\end{Lem}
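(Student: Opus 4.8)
The plan is to prove both displays by induction on $n$, using only the recurrence \eqref{rec:qsth2} and the initial values listed above. The conceptual reason both hold is that $\stss2{n}{k}_q^{(r,s)}$ is the complete homogeneous symmetric polynomial of degree $n-k$ in the $k-r+1$ variables $\bigl([r]_q\bigr)^s,\bigl([r+1]_q\bigr)^s,\dots,\bigl([k]_q\bigr)^s$: the first display is the ``nested-sum'' form of that polynomial, and the second display is its ``monomial'' form after replacing $(n,k)$ by $(n,n-k)$, so that both parts follow once this symmetric-function identity is established. I would nonetheless argue directly at the level of the multiple sums, to match the style of Lemma~\ref{lem:exp11}.

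For the first formula, let $F(n,k)$ denote the multiple sum on the right, adopting the convention $F(n,r):=\bigl([r]_q\bigr)^{(n-r)s}$ (which is the value of $\stss2{n}{r}_q^{(r,s)}$). I would verify that $F$ satisfies \eqref{rec:qsth2}: in the outermost summation isolate the top term $i_{k-r}=n-k$; the remaining $k-r-1$ summations then range over $0\le i_1\le\cdots\le i_{k-r-1}\le n-k=(n-1)-(k-1)$ with the factor $\bigl([k-1]_q\bigr)^{(n-k-i_{k-r-1})s}$, which is exactly $F(n-1,k-1)$. In every other term the factor $\bigl([k]_q\bigr)^{(n-k-i_{k-r})s}$ carries at least one copy of $\bigl([k]_q\bigr)^s$, and pulling it out leaves precisely $F(n-1,k)$. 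Hence $F(n,k)=F(n-1,k-1)+\bigl([k]_q\bigr)^s F(n-1,k)$. Since $F(k,k)=1=\stss2{k}{k}_q^{(r,s)}$ (when $n=k$ the outermost sum forces $i_{k-r}=0$, hence all $i_j=0$ and all exponents vanish) and $F(n,r)$ was chosen to agree with $\stss2{n}{r}_q^{(r,s)}$, induction on $n$ yields $F(n,k)=\stss2{n}{k}_q^{(r,s)}$ for all $r+1\le k\le n$.

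For the second formula, set $H(n,k):=\sum_{r\le i_1\le\cdots\le i_k\le n-k}([i_1]_q\cdots[i_k]_q)^s$, so the claim is $\stss2{n}{n-k}_q^{(r,s)}=H(n,k)$; I read both sides as $0$ when $n-k<r$, so the identity makes sense for all $n\ge r$. Applying \eqref{rec:qsth2} with $k$ replaced by $n-k$ gives $\stss2{n}{n-k}_q^{(r,s)}=\stss2{n-1}{(n-1)-k}_q^{(r,s)}+\bigl([n-k]_q\bigr)^s\stss2{n-1}{(n-1)-(k-1)}_q^{(r,s)}$, which by the induction hypothesis equals $H(n-1,k)+\bigl([n-k]_q\bigr)^s H(n-1,k-1)$. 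On the other side, splitting the defining sum for $H(n,k)$ according to whether $i_k\le n-k-1$ or $i_k=n-k$ produces exactly those two pieces (the first is the empty sum when $n-k=r$, matching $\stss2{n-1}{r-1}_q^{(r,s)}=0$). With base case $k=0$, where $\stss2{n}{n}_q^{(r,s)}=1=H(n,0)$, induction on $n$ completes the proof.

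The routine but error-prone step is the index bookkeeping in the first formula: matching the nesting of the multiple sum and the exponents $(i_{j+1}-i_j)s$ to the two descendants $(n-1,k-1)$ and $(n-1,k)$ in \eqref{rec:qsth2}, and making sure the boundary column $k=r$ is covered by the known value $\bigl([r]_q\bigr)^{(n-r)s}$ rather than by the formula itself. Once that is in place, each of the two inductions is essentially one line.
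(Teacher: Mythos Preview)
Your argument is correct. Both inductions go through exactly as you describe: in the first display, peeling off the top value $i_{k-r}=n-k$ of the outermost index produces $F(n-1,k-1)$, while the remaining range $0\le i_{k-r}\le n-1-k$ with one copy of $\bigl([k]_q\bigr)^s$ factored out yields $\bigl([k]_q\bigr)^s F(n-1,k)$; the boundary checks $F(k,k)=1$ and $F(n,r)=\bigl([r]_q\bigr)^{(n-r)s}$ are fine, and the $k=r+1$ case matches the explicit value of $\stss2{n}{r+1}_q^{(r,s)}$ listed in the paper. For the second display, splitting on $i_k\le n-k-1$ versus $i_k=n-k$ reproduces the recurrence \eqref{rec:qsth2} with $k$ replaced by $n-k$, and the base $H(n,0)=1$ is correct.

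As for comparison with the paper: the paper does not actually supply a proof of this lemma. It is stated immediately after the list of initial values, in parallel with Lemma~\ref{lem:exp11} for the first kind, which is attributed to \cite{Ko24} and proved there ``by using the recurrence relation~\eqref{rec:qsth1}''. The evident intention is that Lemma~\ref{lem:iterated-sum} follows in the same way from \eqref{rec:qsth2}, and your write-up is precisely that argument. Your observation that both displays are avatars of the complete homogeneous symmetric polynomial $h_{n-k}\bigl(([r]_q)^s,\dots,([k]_q)^s\bigr)$ is a nice conceptual wrapper the paper does not make explicit, but the underlying mechanics are the same.
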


\section{Main relations}

We shall use the following expression (see, e.g., \cite[p.247,(6e)]{Comtet}). 

\begin{Lem} 
Let $n$, $M$ and $K$ be positive integers with $M\ge K$. 
For $g_n:=a_1^n+a_2^n+\cdots+a_M^n$ we have 
$$
\sum_{1\le j_1<j_2<\dots<j_K\le M}a_{j_1}a_{j_2}\cdots a_{j_K}=\frac{1}{K!}\mathbf Y_n(g_1,-1!g_2,2! g_3,-3! g_4,\dots)\,, 
$$
where $\mathbf Y_n(x_1,x_2,\dots,x_n)$ is the (complete exponential) Bell polynomial, defined by 
$$
\exp\left(\sum_{m=1}^\infty x_m\frac{t^m}{m!}\right)=1+\sum_{n=1}^\infty\mathbf Y_n(x_1,x_2,\dots,x_n)\frac{t^n}{n!}\,. 
$$ 
That is, 
\begin{align*}
&\mathbf Y_n(x_1,x_2,x_3,\dots,x_n)\\ 
&=\sum_{k=1}^n\sum_{i_1+2 i_2+\cdots+(n-k+1)i_{n-k+1}=n\atop i_1+i_2+i_3+\cdots=k}\frac{n!}{i_1!i_2!\cdots i_{n-k+1}!}\left(\frac{x_1}{1!}\right)^{i_1}\left(\frac{x_2}{2!}\right)^{i_2}\cdots\left(\frac{x_{n-k+1}}{(n-k+1)!}\right)^{i_{n-k+1}}
\end{align*} 
with $\mathbf Y_0=1$. 
\label{lem:bell}
\end{Lem}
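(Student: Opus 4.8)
The plan is to prove the identity by a generating-function argument; this is the standard route to this classical fact, which is essentially Newton's identities packaged through the exponential formula. I would introduce a formal variable $t$ and recall the factorization
\[
\prod_{j=1}^{M}(1+a_j t)=\sum_{K=0}^{M}e_K\,t^K,\qquad e_K:=\sum_{1\le j_1<\dots<j_K\le M}a_{j_1}a_{j_2}\cdots a_{j_K},
\]
so that $e_K$ is exactly the left-hand side of the claimed identity; it therefore suffices to identify the coefficient of $t^K$ on the right-hand side of this factorization with $\frac{1}{K!}\mathbf Y_K(g_1,-1!\,g_2,2!\,g_3,\dots)$.

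Next I would take logarithms. Using $\log(1+a_j t)=\sum_{m\ge1}\frac{(-1)^{m-1}}{m}a_j^m t^m$ and summing over $j$ gives $\sum_{j=1}^{M}\log(1+a_j t)=\sum_{m\ge1}\frac{(-1)^{m-1}}{m}g_m t^m$, hence
\[
\sum_{K\ge0}e_K t^K=\exp\!\Bigl(\sum_{m\ge1}\tfrac{(-1)^{m-1}}{m}g_m t^m\Bigr).
\]
Comparing with the definition of the Bell polynomials, $\exp\bigl(\sum_{m\ge1}x_m t^m/m!\bigr)=1+\sum_{n\ge1}\mathbf Y_n(x_1,\dots,x_n)t^n/n!$, forces $x_m t^m/m!=\frac{(-1)^{m-1}}{m}g_m t^m$, i.e. $x_m=(-1)^{m-1}(m-1)!\,g_m$, so that $x_1=g_1$, $x_2=-1!\,g_2$, $x_3=2!\,g_3$, $x_4=-3!\,g_4,\dots$ --- precisely the arguments of $\mathbf Y$ in the statement. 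Extracting the coefficient of $t^K$ from both sides then yields $e_K=\frac{1}{K!}\mathbf Y_K(g_1,-1!\,g_2,2!\,g_3,-3!\,g_4,\dots)$, which is the assertion (the subscript on $\mathbf Y$ being the number $K$ of factors in the product).

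The only place that needs care is the bookkeeping in matching the ordinary power series $\sum_{m\ge1}\frac{(-1)^{m-1}}{m}g_m t^m$ against the exponential-type series $\sum_{m\ge1}x_m t^m/m!$ that appears in the definition of $\mathbf Y_n$; this is what produces both the alternating signs and the factorials $(m-1)!$ in the arguments, and once this translation is pinned down the rest is a direct comparison of coefficients. As an alternative that avoids generating functions, one could instead combine Newton's identities $Ke_K=\sum_{i=1}^{K}(-1)^{i-1}e_{K-i}g_i$ with the recurrence $\mathbf Y_n(x_1,\dots,x_n)=\sum_{i=1}^{n}\binom{n-1}{i-1}x_i\,\mathbf Y_{n-i}(x_1,\dots,x_{n-i})$ and prove $K!\,e_K=\mathbf Y_K(g_1,-1!\,g_2,2!\,g_3,\dots)$ by induction on $K$, but the argument via the exponential formula is shorter and more transparent.
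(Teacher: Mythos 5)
Your proof is correct. The paper does not actually prove this lemma---it is quoted from Comtet (Advanced Combinatorics, p.~247, (6e))---so there is no in-paper argument to compare against; your generating-function derivation (factor $\prod_j(1+a_jt)$, take logarithms, and match the ordinary series $\sum_m\frac{(-1)^{m-1}}{m}g_mt^m$ against the exponential series $\sum_m x_mt^m/m!$ defining $\mathbf Y_n$, giving $x_m=(-1)^{m-1}(m-1)!\,g_m$) is exactly the standard proof of this classical identity. You are also right to read the subscript on $\mathbf Y$ as $K$ rather than the $n$ printed in the statement, which is a typo: the letter $n$ is overloaded there as the exponent in $g_n$, and the later applications of the lemma in the paper (with $m$ as both the number of factors and the Bell-polynomial index) confirm the intended reading.
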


Hence, if $g_\nu$ is replaced by $\mathfrak Z_n(\zeta_n;1,\nu)$, then we have
\begin{align*}
&\mathfrak Z_n(\zeta_n;m,1)=\frac{1}{m+1}\binom{n-1}{m}\\
&=\sum_{i_1+2 i_2+\cdots=m}\frac{1}{i_1!i_2!\cdots}\left(\frac{\mathfrak Z_n(\zeta_n;1,1)}{1}\right)^{i_1}\left(-\frac{\mathfrak Z_n(\zeta_n;1,2)}{2}\right)^{i_2}\\
&\qquad\qquad\qquad\cdot\left(\frac{\mathfrak Z_n(\zeta_n;1,3)}{3}\right)^{i_3}\left(-\frac{\mathfrak Z_n(\zeta_n;1,4)}{4}\right)^{i_4}\cdots\,.
\end{align*}

In general, we have 
\begin{align*}
&\mathfrak Z_n(\zeta_n;m,s)\\
&=\sum_{i_1+2 i_2+\cdots=m}\frac{1}{i_1!i_2!\cdots}\left(\frac{\mathfrak Z_n(\zeta_n;1,s)}{1}\right)^{i_1}\left(-\frac{\mathfrak Z_n(\zeta_n;1,2 s)}{2}\right)^{i_2}\\
&\qquad\qquad\qquad\cdot\left(\frac{\mathfrak Z_n(\zeta_n;1,3 s)}{3}\right)^{i_3}\left(-\frac{\mathfrak Z_n(\zeta_n;1,4 s)}{4}\right)^{i_4}\cdots\,.
\end{align*}

By using the Bell polynomials, we have 

\begin{theorem}
\begin{align*}
&\mathfrak Z_n(\zeta_n;m,s)\\
&=\frac{1}{m!}\mathbf Y_m\bigl(\mathfrak Z_n(\zeta_n;1,s),-1!\mathfrak Z_n(\zeta_n;1,2 s),2!\mathfrak Z_n(\zeta_n;1,3 s),-3!\mathfrak Z_n(\zeta_n;1,4 s),\dots)\bigr)\,.
\end{align*}
\label{th:zz-bell}
\end{theorem}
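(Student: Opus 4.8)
The plan is to derive Theorem~\ref{th:zz-bell} directly from Lemma~\ref{lem:bell} by choosing the right ``alphabet'' $a_1,\dots,a_M$. First I would set $M=n-1$ and take $a_j=1/(1-\zeta_n^j)^s$ for $j=1,\dots,n-1$. With this choice, the definition \eqref{def:qmzv} gives immediately
$$
\mathfrak Z_n(\zeta_n;m,s)=\sum_{1\le j_1<j_2<\dots<j_m\le n-1}a_{j_1}a_{j_2}\cdots a_{j_m}\,,
$$
that is, $\mathfrak Z_n(\zeta_n;m,s)$ is precisely the $m$-th elementary symmetric function $e_m(a_1,\dots,a_{n-1})$. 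On the other hand, the associated power sums are
$$
g_\nu:=a_1^\nu+\cdots+a_{n-1}^\nu=\sum_{j=1}^{n-1}\frac{1}{(1-\zeta_n^j)^{\nu s}}=\mathfrak Z_n(\zeta_n;1,\nu s)\,,
$$
again straight from \eqref{def:qmzv} with $m=1$ and exponent $\nu s$.

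Then I would invoke Lemma~\ref{lem:bell} with $K=m$ and this $g_\nu$: it asserts exactly that
$$
e_m(a_1,\dots,a_{n-1})=\frac{1}{m!}\mathbf Y_m\bigl(g_1,-1!\,g_2,2!\,g_3,-3!\,g_4,\dots\bigr)\,.
$$
Substituting the two computed expressions $e_m=\mathfrak Z_n(\zeta_n;m,s)$ and $g_\nu=\mathfrak Z_n(\zeta_n;1,\nu s)$ into both sides yields precisely the claimed identity, so the theorem follows. The hypothesis $M\ge K$ of Lemma~\ref{lem:bell} is just the condition $m\le n-1$, which is the natural range in which $\mathfrak Z_n(\zeta_n;m,s)$ is a nonempty sum (for $m\ge n$ both sides vanish trivially, or one may simply restrict to that range as is implicit throughout).

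There is essentially no obstacle here: the whole proof is a matter of recognizing that $\mathfrak Z_n(\zeta_n;\cdot,\cdot)$ packages elementary symmetric functions and power sums of the single alphabet $\{(1-\zeta_n^j)^{-s}\}_{j=1}^{n-1}$, and then quoting the Newton--Girard/Bell-polynomial identity of Lemma~\ref{lem:bell}. If anything deserves a line of care, it is only checking the index bookkeeping—that the subscript $\nu$ in $g_\nu$ corresponds to the exponent $\nu s$ (not $\nu$) in the one-fold zeta value, which is why the arguments of $\mathbf Y_m$ are $\mathfrak Z_n(\zeta_n;1,\nu s)$ rather than $\mathfrak Z_n(\zeta_n;1,\nu)$. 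The displayed computations preceding the theorem statement already carry out this substitution, so the formal proof can be kept to a few lines.
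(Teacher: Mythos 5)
Your proposal is correct and follows exactly the paper's route: the author likewise obtains Theorem~\ref{th:zz-bell} by applying Lemma~\ref{lem:bell} to the alphabet $a_j=(1-\zeta_n^j)^{-s}$, identifying $\mathfrak Z_n(\zeta_n;m,s)$ as the $m$-th elementary symmetric function and $\mathfrak Z_n(\zeta_n;1,\nu s)$ as the power sums $g_\nu$. Your remark on the exponent bookkeeping ($\nu s$ rather than $\nu$) matches the substitution carried out in the displays preceding the theorem.
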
 

By using the equivalent relations in Lemma \ref{lem:gtrudi} below, we also have the following determinant expression.

\begin{theorem} 
For integers $n,m$ with $n\ge 2$ and $m\ge 1$, we have 
\begin{align*}
&\mathfrak Z_n(\zeta_n;m,s)\\
&=\frac{1}{m!}\left|\begin{array}{ccccc}
\mathfrak Z_n(\zeta_n;1,s)&1&0&\cdots&\\ 
\mathfrak Z_n(\zeta_n;1,2 s)&\mathfrak Z_n(\zeta_n;1,s)&2&&\vdots\\ 
\vdots&&\ddots&&0\\
\mathfrak Z_n\bigl(\zeta_n;1,(m-1)s\bigr)&\mathfrak Z_n\bigl(\zeta_n;1,(m-2)s\bigr)&\cdots&\mathfrak Z_n(\zeta_n;1,s)&m-1\\ 
\mathfrak Z_n(\zeta_n;1,m s)&\mathfrak Z_n\bigl(\zeta_n;1,(m-1)s\bigr)&\cdots&\mathfrak Z_n(\zeta_n;1,2 s)&\mathfrak Z_n(\zeta_n;1,s)\\ 
\end{array}
\right|\,.
\end{align*}
\label{th:zz-det}
\end{theorem}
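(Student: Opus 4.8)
The plan is to deduce Theorem \ref{th:zz-det} directly from Theorem \ref{th:zz-bell} (which itself follows from Lemma \ref{lem:bell} upon substituting $g_\nu=\mathfrak Z_n(\zeta_n;1,\nu s)$ and using that the $K$-fold elementary symmetric function of the numbers $\bigl(1/(1-\zeta_n^i)^s\bigr)_{1\le i\le n-1}$ equals $\mathfrak Z_n(\zeta_n;K,s)$), by invoking a classical determinant--Bell polynomial identity. So the first step is to record, as Lemma \ref{lem:gtrudi}, the Trudi-type formula stating that for any sequence $(a_1,a_2,\dots)$,
\[
\frac{1}{m!}\mathbf Y_m(a_1,-1!a_2,2!a_3,-3!a_4,\dots)
=\frac{1}{m!}
\left|\begin{array}{ccccc}
a_1&1&0&\cdots&0\\
a_2&a_1&2&&\vdots\\
\vdots&&\ddots&\ddots&0\\
a_{m-1}&a_{m-2}&\cdots&a_1&m-1\\
a_m&a_{m-1}&\cdots&a_2&a_1
\end{array}\right|,
\]
i.e. the $m\times m$ lower-Hessenberg determinant whose first column is $(a_1,\dots,a_m)^{\mathsf T}$, whose superdiagonal is $(1,2,\dots,m-1)$, and which is otherwise constant along diagonals with value $a_j$ at distance $j-1$ below the main diagonal. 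Then Theorem \ref{th:zz-det} is just this identity with $a_j=\mathfrak Z_n(\zeta_n;1,j s)$, combined with Theorem \ref{th:zz-bell}.

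The core of the argument is therefore the proof of Lemma \ref{lem:gtrudi}. I would prove it by showing both sides satisfy the same first-order recurrence in $m$. Expanding the Hessenberg determinant $D_m$ along its last row (or last column) gives a recurrence of the form
\[
D_m=\sum_{j=1}^{m}(-1)^{j-1}\,\frac{(m-1)!}{(m-j)!}\,a_j\,D_{m-j},\qquad D_0=1,
\]
because each minor obtained by deleting the last column and the $(m-j+1)$-st row is block-triangular: an upper-triangular block with the superdiagonal entries $m-j+1,\dots,m-1$ on its diagonal (contributing $(m-1)!/(m-j)!$) times a copy of $D_{m-j}$. On the other side, writing $b_m:=\tfrac{1}{m!}\mathbf Y_m(a_1,-a_2,2!a_3,\dots)$ and using the defining generating function of the Bell polynomials, one has $\sum_{m\ge0}b_m t^m=\exp\bigl(\sum_{k\ge1}\tfrac{(-1)^{k-1}}{k}a_k t^k\bigr)=:F(t)$; differentiating, $F'(t)=F(t)\sum_{k\ge1}(-1)^{k-1}a_k t^{k-1}$, and comparing coefficients of $t^{m-1}$ yields exactly $m\,b_m=\sum_{j=1}^m(-1)^{j-1}a_j b_{m-j}\cdot$(appropriate factor), which matches the determinant recurrence after dividing $D_m$ by $m!$. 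Matching the two recurrences and the initial value $b_0=D_0/0!=1$ finishes the induction.

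The main obstacle — really the only non-routine point — is getting the combinatorial bookkeeping in the determinant expansion exactly right: verifying that the cofactor of $a_j$ in the last row of $D_m$ equals $(-1)^{j-1}\frac{(m-1)!}{(m-j)!}D_{m-j}$, i.e. correctly identifying the sign $(-1)^{j-1}$ and the product of superdiagonal entries $(m-j+1)(m-j+2)\cdots(m-1)$ coming from the triangular block. Once that is pinned down, the generating-function side is immediate, and the equality of the two sequences follows by a one-line induction. I would also remark that this is the standard equivalence between Bell polynomials and Hessenberg determinants (cf. the references to Bachmann et al. cited in the introduction, and classical sources such as \cite{Comtet}), so in the write-up Lemma \ref{lem:gtrudi} can be stated with a brief proof or with a pointer to the literature, and Theorem \ref{th:zz-det} then follows by a single substitution into Theorem \ref{th:zz-bell}.
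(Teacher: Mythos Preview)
Your approach is correct and matches the paper's: both deduce Theorem \ref{th:zz-det} from Theorem \ref{th:zz-bell} via the Bell-polynomial/Hessenberg-determinant equivalence recorded as Lemma \ref{lem:gtrudi}. The paper establishes that equivalence by quoting Trudi's formula (citing Muir and Trudi) for the passage $(1)\Leftrightarrow(2)$ and then cleaning up signs, whereas you give a self-contained recurrence argument---essentially the paper's route $(2)\Leftrightarrow(4)$ combined with the logarithmic-derivative computation showing the Bell-polynomial sequence also satisfies $(4)$.

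Two cosmetic slips to fix in your write-up: the $m$-term recurrence for $D_m$ arises from expansion along the last \emph{row} (or the first column), not the last column, and the triangular block in the cofactor is lower-triangular rather than upper-triangular; your stated recurrence $D_m=\sum_{j=1}^{m}(-1)^{j-1}\tfrac{(m-1)!}{(m-j)!}\,a_j D_{m-j}$ and the factor $(m-1)!/(m-j)!$ are nonetheless correct. (Also, the Bell--Hessenberg identity is classical---Comtet, Muir, Trudi---rather than due to Bachmann et al., whose work concerns the multiple-zeta side.)
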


\begin{theorem}  
For integers $n,m$ with $n\ge 2$ and $m\ge 1$, we have 
\begin{align*}
&\mathfrak Z_n(\zeta_n;1,m s)\\
&=\left|\begin{array}{ccccc}
\mathfrak Z_n(\zeta_n;1,s)&1&0&\cdots&\\ 
2\mathfrak Z_n(\zeta_n;2,s)&\mathfrak Z_n(\zeta_n;1,s)&1&&\vdots\\ 
\vdots&&\ddots&&0\\
(m-1)\mathfrak Z_n(\zeta_n;m-1,s)&\mathfrak Z_n(\zeta_n;m-2,s)&\cdots&\mathfrak Z_n(\zeta_n;1,s)&1\\ 
m\mathfrak Z_n(\zeta_n;m,s)&\mathfrak Z_n(\zeta_n;m-1,s)&\cdots&\mathfrak Z_n(\zeta_n;2,s)&\mathfrak Z_n(\zeta_n;1,s)\\ 
\end{array}
\right|\,.
\end{align*}
\label{th:zz-det2}
\end{theorem}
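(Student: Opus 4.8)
The plan is to obtain Theorem \ref{th:zz-det2} as the ``inverse'' of Theorem \ref{th:zz-det} by invoking the standard fact that the two Trudi-type determinant identities are equivalent under the involution that swaps a sequence with the sequence of its ``Newton--Girard partner.'' Concretely, write $e_m:=\mathfrak Z_n(\zeta_n;m,s)$ (the elementary-symmetric-function side) and $p_m:=\mathfrak Z_n(\zeta_n;1,m s)$ (the power-sum side). By the observation in Lemma \ref{lem:bell} applied with $a_j=1/(1-\zeta_n^j)^s$, $j=1,\dots,n-1$, we have $p_m=\sum_j a_j^m$ and $e_m=\sum_{j_1<\dots<j_m}a_{j_1}\cdots a_{j_m}$, so $e_m$ and $p_m$ are literally the elementary symmetric polynomials and the power sums of the same finite multiset of quantities. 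Theorem \ref{th:zz-det} is exactly the determinant form of Newton's identities expressing $e_m$ through $p_1,\dots,p_m$; Theorem \ref{th:zz-det2} must then be the companion determinant expressing $p_m$ through $e_1,\dots,e_m$.

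The key steps, in order, are: (i) record the Newton--Girard recursion $p_m - e_1 p_{m-1}+e_2 p_{m-2}-\cdots+(-1)^{m-1}e_{m-1}p_1+(-1)^m m\,e_m=0$, valid here because $e_k=0$ for $k\ge n$ but only indices up to $m$ appear; (ii) rewrite this linear system in the unknowns $p_1,\dots,p_m$ with coefficient matrix the lower-triangular-plus-superdiagonal matrix whose entries are the $e_j$'s and $1$'s, exactly the matrix appearing in the statement of Theorem \ref{th:zz-det2} once one moves the last column to the right-hand side; (iii) solve for $p_m$ by Cramer's rule, which yields precisely the displayed determinant after expanding along the last column and checking the sign. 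Alternatively, and more cleanly, I would cite the ``generalized Trudi formula'' that the paper refers to as Lemma \ref{lem:gtrudi}: that lemma presumably states the equivalence of the $e\leftrightarrow p$ pair of determinant identities directly, so Theorem \ref{th:zz-det2} follows by feeding the already-proved Theorem \ref{th:zz-det} into it, with the roles of ``$e$'' and ``$p$'' interchanged and the factor-of-$m$ bookkeeping on the first column carried along.

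The only genuine care needed is the combinatorial bookkeeping: which entries carry the integer multipliers $2,3,\dots,m-1$ versus the plain $1$'s on the superdiagonal, and the global sign. In Theorem \ref{th:zz-det} the multipliers $1!,2!,\dots$ were absorbed into the Bell-polynomial arguments and the superdiagonal carried $1,2,\dots,m-1$; upon inversion these migrate, so in Theorem \ref{th:zz-det2} the superdiagonal is all $1$'s and instead the first column acquires the factors $2,3,\dots,m$ (matching the $m\,e_m$ in Newton's identity and the $k\mathfrak Z_n(\zeta_n;k,s)$ entries shown). I expect this sign-and-multiplier matching to be the main obstacle — not deep, but the place where an off-by-one or a missed $(-1)^{m}$ could creep in — so I would verify it explicitly for $m=1,2,3$ against the initial-value formulas for $\sttf2{n}{r+1}_q^{(r,s)}$ and $\sttf2{n}{r+2}_q^{(r,s)}$ recorded earlier, which already encode $e_1,e_2$ in terms of $p_1,p_2$. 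Everything else is a direct application of Lemma \ref{lem:gtrudi} (equivalently Cramer's rule on Newton's identities) and requires no new input beyond the identification of $\mathfrak Z_n(\zeta_n;m,s)$ and $\mathfrak Z_n(\zeta_n;1,ms)$ as the elementary-symmetric and power-sum data of the numbers $1/(1-\zeta_n^{j})^s$.
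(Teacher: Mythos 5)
Your proposal is correct and follows essentially the same route as the paper: identify $\mathfrak Z_n(\zeta_n;m,s)$ and $\mathfrak Z_n(\zeta_n;1,ms)$ as the elementary symmetric functions and power sums of the quantities $1/(1-\zeta_n^j)^s$, and then invoke the equivalence of the Bell-polynomial expansion with the two companion Trudi-type determinants (Lemma \ref{lem:gtrudi}, parts (1)--(5), i.e.\ Newton's identities in determinant form). The paper proves Theorem \ref{th:zz-det2} exactly by feeding the already-established relation underlying Theorem \ref{th:zz-bell}/\ref{th:zz-det} into part (3) of that lemma, which is what you describe.
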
  

If we put $s=1$ and $m$ is replaced by $s$ in Theorem \ref{th:zz-det2}, by using (\ref{eq:zzm1}) we have the determinat formula for $\mathfrak Z_n(\zeta_n;1,s)$.  

\begin{Cor}  
For integers $n,s$ with $n,s\ge 2$, we have 
$$
\mathfrak Z_n(\zeta_n;1,s)
=\left|\begin{array}{ccccc}
\frac{n-1}{2}&1&0&\cdots&\\ 
\frac{2}{3}\binom{n-1}{2}&\frac{n-1}{2}&1&&\vdots\\ 
\vdots&&\ddots&&0\\
\frac{s-1}{s}\binom{n-1}{s-1}&\frac{1}{s-1}\binom{n-1}{s-2}&\cdots&\frac{n-1}{2}&1\\ 
\frac{s}{s+1}\binom{n-1}{s}&\frac{1}{s}\binom{n-1}{s-1}&\cdots&\frac{1}{3}\binom{n-1}{2}&\frac{n-1}{2}\\ 
\end{array}
\right|\,.
$$
\label{cor:zz-det2}
\end{Cor}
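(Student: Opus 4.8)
The plan is to derive Corollary~\ref{cor:zz-det2} purely as a specialization of Theorem~\ref{th:zz-det2}, substituting $s=1$ throughout and then relabeling. Concretely, Theorem~\ref{th:zz-det2} with $s=1$ reads
\[
\mathfrak Z_n(\zeta_n;1,m)=\left|\begin{array}{ccccc}
\mathfrak Z_n(\zeta_n;1,1)&1&0&\cdots&\\
2\mathfrak Z_n(\zeta_n;2,1)&\mathfrak Z_n(\zeta_n;1,1)&1&&\vdots\\
\vdots&&\ddots&&0\\
(m-1)\mathfrak Z_n(\zeta_n;m-1,1)&\mathfrak Z_n(\zeta_n;m-2,1)&\cdots&\mathfrak Z_n(\zeta_n;1,1)&1\\
m\mathfrak Z_n(\zeta_n;m,1)&\mathfrak Z_n(\zeta_n;m-1,1)&\cdots&\mathfrak Z_n(\zeta_n;2,1)&\mathfrak Z_n(\zeta_n;1,1)\\
\end{array}\right|,
\]
so the only remaining task is to insert the closed form \eqref{eq:zzm1}, namely $\mathfrak Z_n(\zeta_n;k,1)=\frac{1}{k+1}\binom{n-1}{k}$, into every entry of the matrix.

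First I would record the values of the relevant entries. The subdiagonal-type entry in row $j$, first column is $j\,\mathfrak Z_n(\zeta_n;j,1)=\frac{j}{j+1}\binom{n-1}{j}$; in particular the top-left entry ($j=1$) is $\frac{1}{2}\binom{n-1}{1}=\frac{n-1}{2}$, the $(2,1)$-entry is $\frac{2}{3}\binom{n-1}{2}$, and the bottom-left $(m,1)$-entry is $\frac{m}{m+1}\binom{n-1}{m}$. The generic entry $\mathfrak Z_n(\zeta_n;k,1)$ appearing off the first column becomes $\frac{1}{k+1}\binom{n-1}{k}$; so the diagonal entries are all $\mathfrak Z_n(\zeta_n;1,1)=\frac{n-1}{2}$, the superdiagonal entries remain $1$, and the entries below the diagonal (other than the first column) are $\mathfrak Z_n(\zeta_n;k,1)=\frac{1}{k+1}\binom{n-1}{k}$ for the appropriate $k$. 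Then I would rename the running index $m$ as $s$, matching the corollary's statement, and note the hypothesis $n,s\ge 2$ guarantees $n\ge 2$ and $m\ge 1$ as required by Theorem~\ref{th:zz-det2} (with $m=s\ge 2\ge 1$), and that \eqref{eq:zzm1} is valid for the range of $k$ occurring, i.e. $0\le k\le s\le n-1$, which follows since $\binom{n-1}{k}=0$ automatically when $k>n-1$ and the formula is still formally correct there.

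The verification is essentially a term-by-term substitution and a change of name of the summation index, so there is no real analytic or combinatorial obstacle; the only point requiring a moment's care is bookkeeping of which entries sit in the first column (and hence carry the extra factor of the row index) versus the interior entries, and checking that the displayed matrix in the corollary is exactly what one obtains. In particular one should confirm that the $(j,1)$ entry for $2\le j\le s-1$ is $\frac{j}{j+1}\binom{n-1}{j}$, which matches the pattern $\frac{s-1}{s}\binom{n-1}{s-1}$ shown in the penultimate row, and that the entry immediately to the right of it in row $j$, column $2$, namely $\mathfrak Z_n(\zeta_n;j-1,1)=\frac{1}{j}\binom{n-1}{j-1}$, matches the displayed $\frac{1}{s-1}\binom{n-1}{s-2}$ in the penultimate row and $\frac{1}{s}\binom{n-1}{s-1}$ in the last row. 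Once this pattern-matching is carried out the corollary follows immediately.
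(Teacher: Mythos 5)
Your proposal is correct and is exactly the paper's argument: the corollary is obtained by setting $s=1$ in Theorem \ref{th:zz-det2}, renaming $m$ as $s$, and substituting the closed form \eqref{eq:zzm1} into each entry (with the extra factor of the row index in the first column). The entry-by-entry bookkeeping you carry out matches the displayed matrix.
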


By taking $s=2,3,\dots,9$ in Corollary \ref{cor:zz-det2}, we have 
\begin{align*}
\mathfrak Z_n(\zeta_n;1,2)&=-\frac{(n-1)(n-5)}{12}\,,\\
\mathfrak Z_n(\zeta_n;1,3)&=-\frac{(n-1)(n-3)}{8}\,,\\
\mathfrak Z_n(\zeta_n;1,4)&=\frac{(n-1)(n^3+n^2-109 n+251)}{6!}\,,\\
\mathfrak Z_n(\zeta_n;1,5)&=\frac{(n-1)(n-5)(n^2+6 n-19)}{288}\,,\\
\mathfrak Z_n(\zeta_n;1,6)&=-\frac{(n-1)(2 n^5+2 n^4-355 n^3-355 n^2+11153 n-19087)}{12\cdot 7!}\,,\\
\mathfrak Z_n(\zeta_n;1,7)&=-\frac{(n-1)(n-7)(2 n^4+16 n^3-33 n^2-376 n+751)}{24\cdot 6!}\,,\\
\mathfrak Z_n(\zeta_n;1,8)&=\frac{(n-1)(3 n^7+3 n^6-917 n^5-917 n^4+39697 n^3+39697 n^2-744383 n+1070017)}{10!}\,,\\
\mathfrak Z_n(\zeta_n;1,9)&=\frac{27(n-1)(n-3)(n-9)(n^5+13 n^4+10 n^3-350 n^2-851 n+2857)}{2\cdot 10!}\,.
\end{align*} 
The constant terms of $\mathfrak Z_n(\zeta_n;1,s)$ ($s=1,2,\dots$) are given by 
$$
-\frac{1}{2},-\frac{5}{12},-\frac{3}{8},-\frac{251}{720},-\frac{95}{288},-\frac{19087}{60480},-\frac{5257}{17280},\dots\,,
$$
which are equal to 
$$
(-1)^{s-1}\frac{B_{s}^{(s)}}{s!}\quad(s=1,2,\dots)\,.  
$$
Here, N\"orlund numbers $B_{n}^{(n)}$ are given by 
$$
\frac{t}{(1+t)\log(1+t)}=\sum_{n=0}^\infty B_n^{(n)}\frac{t^n}{n!}
$$
or the Bernoulli numbers of order $\alpha$ by 
$$
\left(\frac{t}{e^t-1}\right)^\alpha=\sum_{n=0}^\infty B_n^{(\alpha)}\frac{t^n}{n!}\,.
$$

\subsection{Proof of the results} 

For two sequences $a_0=1,a_1,a_2,\dots$ and $b_0=1,b_1,b_2,\dots$ we have the equivalent expressions. 

\begin{Lem}  
The following expressions are equivalent.  
\begin{enumerate}
\item[(1)] $\displaystyle b_m=\sum_{i_1+2 i_2+\cdots+m i_m=m\atop i_1,i_2,\dots,i_m\ge 0}\frac{1}{i_1!i_2!\cdots i_m!}\left(\frac{a_1}{1}\right)^{i_1}\left(\frac{-a_2}{2}\right)^{i_2}\cdots\left(\frac{(-1)^{m-1}a_m}{m}\right)^{i_m}$
\item[(2)] $\displaystyle b_m=\frac{1}{m!}\left|\begin{array}{ccccc}
a_1&1&0&\cdots&0\\
a_2&a_1&2&&\vdots\\
\vdots&&\ddots&&0\\
a_{m-1}&a_{m-2}&\cdots&a_1&m-1\\
a_m&a_{m-1}&\cdots&a_2&a_1\\
\end{array}
\right|$ 
\item[(3)] $\displaystyle a_n=\left|\begin{array}{ccccc}
b_1&1&0&\cdots&0\\
2 b_2&b_1&1&&\vdots\\
\vdots&&\ddots&&0\\
(n-1)b_{n-1}&b_{n-2}&\cdots&b_1&1\\
n b_n&b_{n-1}&\cdots&b_2&b_1\\
\end{array}
\right|$ 
\item[(4)] $\displaystyle m b_m=\sum_{i=1}^m(-1)^{i-1}a_i b_{m-i}$
\item[(5)] $\displaystyle a_n=\sum_{j=1}^{n-1}(-1)^{j-1}b_j a_{n-j}+(-1)^{n+1}n b_n$ 
\end{enumerate}
\label{lem:gtrudi}
\end{Lem}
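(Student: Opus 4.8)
The plan is to route the whole equivalence through the two scalar recurrences (4) and (5), which carry the combinatorial content, and then to recognize the Bell-polynomial sum in (1) and the Hessenberg determinants in (2) and (3) as encodings of these recurrences.

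\emph{From (1) to (4) and back.} Write $B(t)=\sum_{m\ge 0}b_m t^m$, a formal power series with $B(0)=b_0=1$. Multiplying out
\[
\exp\!\Bigl(\sum_{k\ge 1}\frac{(-1)^{k-1}a_k}{k}t^k\Bigr)=\prod_{k\ge 1}\;\sum_{i_k\ge 0}\frac{1}{i_k!}\Bigl(\frac{(-1)^{k-1}a_k}{k}\Bigr)^{i_k}t^{k i_k}
\]
and extracting the coefficient of $t^m$ shows that (1) is exactly the statement $B(t)=\exp\!\bigl(\sum_{k\ge 1}\frac{(-1)^{k-1}a_k}{k}t^k\bigr)$. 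Taking the logarithmic derivative of this identity and clearing the denominator gives $tB'(t)=B(t)\sum_{k\ge 1}(-1)^{k-1}a_k t^k$, and comparing coefficients of $t^m$ yields $mb_m=\sum_{i=1}^m(-1)^{i-1}a_i b_{m-i}$, i.e.\ (4). Conversely, (4) together with $b_0=1$ determines all $b_m$ uniquely, hence pins down $B(t)$, which must then be the exponential above; so (4) implies (1).

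\emph{Equivalence of (4) and (5).} These are the same relation read in the two directions. Putting $m=n$ in (4) and splitting off the top term $i=n$ (with $b_0=1$) gives $nb_n=(-1)^{n-1}a_n+\sum_{i=1}^{n-1}(-1)^{i-1}a_i b_{n-i}$; solving for $a_n$, multiplying through by $(-1)^{n-1}$, and reindexing the remaining sum by $j=n-i$ turns it into (5) after a one-line sign check (using $(-1)^{n-1}=(-1)^{n+1}$). Reversing the computation recovers (4) from (5). Since each recurrence, with the normalizations $a_0=b_0=1$, determines one sequence from the other, the two are equivalent.

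\emph{The determinant expressions.} Both matrices are lower Hessenberg, and for such a matrix a cofactor expansion along the last column, iterated, produces a linear recurrence for the determinant in terms of its smaller leading principal minors --- the classical Trudi-type identity (cf.\ the discussion around \cite[p.\,247]{Comtet}). Let $D_m$ denote the $m\times m$ determinant appearing in (2), with $D_0=1$ (empty determinant). That expansion gives
\[
D_m=\sum_{i=1}^{m}(-1)^{i-1}a_i\,\frac{(m-1)!}{(m-i)!}\,D_{m-i},
\]
and substituting $b_m=D_m/m!$ and dividing by $(m-1)!$ produces precisely (4); uniqueness of the solution of (4) gives the converse. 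Similarly, letting $E_n$ be the $n\times n$ determinant in (3) (with $E_0=1$), the same expansion --- now the superdiagonal entries are all $1$ and the first column carries the extra factors $i b_i$ --- gives $E_n=(-1)^{n+1}n b_n+\sum_{j=1}^{n-1}(-1)^{j-1}b_j E_{n-j}$, which is (5) with $a_n=E_n$. Altogether $(1)\Leftrightarrow(4)\Leftrightarrow(5)$, $(2)\Leftrightarrow(4)$ and $(3)\Leftrightarrow(5)$, which is the assertion.

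The step I expect to be the main obstacle is the Hessenberg expansion: after striking the last column one must correctly identify the minor contributed by the penultimate row, keep track of the product of superdiagonal entries it carries, and manage the alternating cofactor signs, so that the index shift $i\mapsto m-i+1$ (respectively $j\mapsto n-j+1$) lands exactly on (4) (respectively (5)); an off-by-one there would break the identification. Everything else is formal-power-series manipulation and reindexing, and the Hessenberg recurrence may simply be quoted if one prefers to bypass the expansion.
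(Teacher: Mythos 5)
Your proof is correct, and it routes the equivalences differently from the paper. The paper's proof is a chain of citations and one-line reductions: $(1)\Leftrightarrow(2)$ is obtained by quoting Trudi's determinant formula, $(2)\Leftrightarrow(4)$ and $(3)\Leftrightarrow(5)$ by expanding the respective determinants in the last column, and $(4)\Leftrightarrow(5)$ by invoking an inversion formula from \cite{Ko24,KR}. You instead take the recurrences (4) and (5) as the hub: you prove $(1)\Leftrightarrow(4)$ by recognizing (1) as the coefficient extraction from $B(t)=\exp\bigl(\sum_k\tfrac{(-1)^{k-1}a_k}{k}t^k\bigr)$ and taking the logarithmic derivative, and you prove $(4)\Leftrightarrow(5)$ by a direct rearrangement (isolating the $i=n$ term, using $b_0=1$, and reindexing $j=n-i$), which shows the two recurrences are literally the same equation solved for $b_n$ versus $a_n$ --- no external inversion formula is needed. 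Your treatment of (2) and (3) via the lower-Hessenberg last-row/last-column expansion is essentially the paper's, and your recurrences $D_m=\sum_{i=1}^m(-1)^{i-1}a_i\frac{(m-1)!}{(m-i)!}D_{m-i}$ and $E_n=(-1)^{n+1}nb_n+\sum_{j=1}^{n-1}(-1)^{j-1}b_jE_{n-j}$ check out, including the superdiagonal products and signs. What your route buys is self-containedness (Trudi's formula and the inversion formula become consequences rather than inputs) and a transparent reason why the alternating signs and the factors $1/k$ appear in (1), namely the expansion of $\log$; what the paper's route buys is brevity, since Trudi's formula delivers $(1)\Leftrightarrow(2)$ in one stroke.
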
 
\begin{proof}
(1) and (2) are equivalent because of a simple modification of Trudi's formula (\cite[Vol.3, p.214]{Muir},\cite{Trudi}): 
\begin{multline*} 
\left|
\begin{array}{ccccc}
a_1  & a_0  &  \cdots   & & 0  \\
a_2  & a_{1}    &  \cdots  & &   \\
\vdots  &  \vdots &  \ddots  &  & \vdots  \\
a_{n-1}  &a_{n-2}    &  \cdots  &a_1  & a_0  \\
a_n  & a_{n-1}   &  \cdots  & a_2  & a_1
\end{array}
\right|\\
=
\sum_{i_1 + 2 i_2 + \cdots +n t_n=n}\frac{(i_1+\cdots + i_n)!}{i_1 !\cdots i_n !}(-a_0)^{n-i_1-\cdots - i_n}a_1^{i_1}a_2^{i_2}\cdots a_n^{i_n}\,.  
\end{multline*}
By expanding the determinant of (2) in the last column, we can see that (2) and (4) are equivalent. Similarly, by expanding the determinant of (3) in the last column, we can see that (3) and (5) are equivalent. 
By applying the inversion formula  (see, e.g. \cite[Lemma 1]{Ko24},\cite[Theorem 1]{KR}), we can find that (4) and (5) are equivalent.  
\end{proof}

\begin{proof}[Proof of Theorem \ref{th:zz-det}.]
By Lemma \ref{th:zz-det}, we have 
{\small  
\begin{align*}
&\mathfrak Z_n(\zeta_n;m,s)\\
&=\frac{1}{m!}\left|\begin{array}{ccccc}
\mathfrak Z_n(\zeta_n;1,s)&-1&0&\cdots&\\ 
-\mathfrak Z_n(\zeta_n;1,2 s)&\mathfrak Z_n(\zeta_n;1,s)&-2&&\vdots\\ 
\vdots&&\ddots&&0\\
(-1)^{m-2}\mathfrak Z_n\bigl(\zeta_n;1,(m-1)s\bigr)&(-1)^{m-3}\mathfrak Z_n\bigl(\zeta_n;1,(m-2)s\bigr)&\cdots&\mathfrak Z_n(\zeta_n;1,s)&-m+1\\ 
(-1)^{m-1}\mathfrak Z_n(\zeta_n;1,m s)&(-1)^{m-2}\mathfrak Z_n\bigl(\zeta_n;1,(m-1)s\bigr)&\cdots&-\mathfrak Z_n(\zeta_n;1,2 s)&\mathfrak Z_n(\zeta_n;1,s)\\ 
\end{array}
\right|\\ 
&=\frac{1}{m!}\left|\begin{array}{ccccc}
\mathfrak Z_n(\zeta_n;1,s)&1&0&\cdots&\\ 
\mathfrak Z_n(\zeta_n;1,2 s)&\mathfrak Z_n(\zeta_n;1,s)&2&&\vdots\\ 
\vdots&&\ddots&&0\\
\mathfrak Z_n\bigl(\zeta_n;1,(m-1)s\bigr)&\mathfrak Z_n\bigl(\zeta_n;1,(m-2)s\bigr)&\cdots&\mathfrak Z_n(\zeta_n;1,s)&m-1\\ 
\mathfrak Z_n(\zeta_n;1,m s)&\mathfrak Z_n\bigl(\zeta_n;1,(m-1)s\bigr)&\cdots&\mathfrak Z_n(\zeta_n;1,2 s)&\mathfrak Z_n(\zeta_n;1,s)\\ 
\end{array}
\right|\,.
\end{align*} 
} 
\end{proof}

\subsection{Relations with another finite multiple zeta function}  

Different finite $q$-multiple zeta functions have been introduced and studied.  
In \cite{BTT18} the finite multiple harmonic $q$-series  
\begin{equation} 
z_n(q;s_1,\dots,s_m):=\sum_{n-1\ge i_1>\cdots>i_m\ge 1}\frac{q^{(s_1-1)i_1}\cdots q^{(s_m-1)i_m}}{([i_1]_q)^{s_1}\cdots([i_m]_q)^{s_m1}}
\label{def:btt} 
\end{equation} 
is studied (see also \cite{BTT20,HI17,Tasaka21}).  
Applying the results from this paper to our function in (\ref{def:qmzv}) leads to the expression of the value of $\mathfrak Z_n(q;1,s)$ in terms of the degenerate Bernoulli numbers $\beta_j(\lambda)$. The degenerate Bernoulli numbers are defined by Carlitz \cite{Carlitz56} as 
$$
\frac{t}{(1+\lambda t)^{1/\lambda}-1}=\sum_{k=0}^\infty\beta_k(\lambda)\frac{t^k}{k!}\,. 
$$ 
The initial values are given by 
$$
\{\beta_k(\lambda)\}_{k\ge 0}=1,\frac{\lambda-1}{2},-\frac{\lambda^2-1}{6},\frac{\lambda^3-\lambda}{4},-\frac{19\lambda^4-20\lambda^2+1}{30},\frac{3\lambda^5-10\lambda^3+\lambda}{4},\dots,
$$

\begin{theorem} 
$$ 
\mathfrak Z_n(q;1,s)=-\sum_{j=1}^s\binom{s-1}{j-1}\beta_j(n^{-1})\frac{n^j}{j!}\,. 
$$ 
\label{th:dgber}
\end{theorem}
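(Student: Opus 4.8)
The plan is to prove the identity by generating functions, for $q=\zeta_n$ (the right-hand side carries no $q$, so this is the substance of the statement), exploiting the cyclotomic factorization $\prod_{i=1}^{n-1}(x-\zeta_n^i)=\frac{x^n-1}{x-1}=1+x+\cdots+x^{n-1}$. First I would obtain a closed form for the left side: since $\frac{1}{(1-\zeta_n^i)^s}=[t^{s-1}]\frac{1}{(1-\zeta_n^i)-t}$, summation over $i$ gives the formal power series identity
\[
F(t):=\sum_{s\ge 1}\mathfrak Z_n(\zeta_n;1,s)\,t^{s-1}=\sum_{i=1}^{n-1}\frac{1}{(1-t)-\zeta_n^i}=\frac{d}{dx}\log\frac{x^n-1}{x-1}\Big|_{x=1-t}=\frac{n(1-t)^{n-1}}{(1-t)^n-1}+\frac{1}{t},
\]
the two apparent poles at $t=0$ cancelling. (The same closed form also drops out of results already in the paper: feeding $g_\nu=\mathfrak Z_n(\zeta_n;1,\nu)$ into Lemma~\ref{lem:bell} together with $\mathfrak Z_n(\zeta_n;m,1)=\frac1{m+1}\binom{n-1}{m}$ gives $\sum_{m\ge0}\mathfrak Z_n(\zeta_n;m,1)t^m=\exp\big(\sum_{j\ge1}\frac{(-1)^{j-1}}{j}\mathfrak Z_n(\zeta_n;1,j)t^j\big)$, whose left side equals $\frac{(1+t)^n-1}{nt}$; taking the logarithmic derivative and replacing $t$ by $-t$ recovers $F(t)$.)

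Next I would compute the generating function of the claimed right-hand side. Put $\gamma_j:=\beta_j(n^{-1})\frac{n^j}{j!}$, so $\gamma_0=1$; substituting $t\mapsto nw$ in the degenerate-Bernoulli series with $\lambda=n^{-1}$ gives $\sum_{j\ge0}\gamma_j w^j=\frac{nw}{(1+w)^n-1}$, hence $\sum_{l\ge0}\gamma_{l+1}w^l=\frac1w\big(\frac{nw}{(1+w)^n-1}-1\big)=\frac{n}{(1+w)^n-1}-\frac1w$. Re-indexing $j=l+1$, the right-hand side of the theorem is $R_s=-\sum_{l=0}^{s-1}\binom{s-1}{l}\gamma_{l+1}$, so by the elementary identity $\sum_{i\ge0}\big(\sum_{l=0}^i\binom{i}{l}a_l\big)t^i=\frac{1}{1-t}\sum_{l\ge0}a_l\big(\frac{t}{1-t}\big)^l$ (a consequence of $\sum_{i\ge l}\binom{i}{l}t^i=\frac{t^l}{(1-t)^{l+1}}$) applied with $a_l=\gamma_{l+1}$, and using the simplification $1+\frac{t}{1-t}=\frac{1}{1-t}$,
\[
\sum_{s\ge1}R_s\,t^{s-1}=-\frac{1}{1-t}\left(\frac{n}{(1-t)^{-n}-1}-\frac{1-t}{t}\right)=\frac{n(1-t)^{n-1}}{(1-t)^n-1}+\frac1t=F(t).
\]
Comparing the coefficient of $t^{s-1}$ on both sides yields $\mathfrak Z_n(\zeta_n;1,s)=R_s=-\sum_{j=1}^s\binom{s-1}{j-1}\beta_j(n^{-1})\frac{n^j}{j!}$.

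All of this is routine once it is set up; the only places needing care are bookkeeping the two cancelling $\frac1t$ terms and observing that the $w$-series is evaluated at $\frac{t}{1-t}$, which has zero constant term, so every substitution is legitimate for formal power series. The one genuinely substantive ingredient — and the natural candidate to isolate as a lemma — is the closed evaluation of $F(t)$ in the first step; granted that and the degenerate-Bernoulli generating function, the theorem is a one-line coefficient comparison, and the binomial weight $\binom{s-1}{j-1}$ in the statement is precisely what records the discrepancy between $(1-t)^{n-1}$ and $(1-t)^n$. A more pedestrian alternative, avoiding generating functions altogether, would be to expand the determinant of Corollary~\ref{cor:zz-det2} (equivalently, run the recurrence of Lemma~\ref{lem:gtrudi}) and verify by induction that $-\sum_j\binom{s-1}{j-1}\beta_j(n^{-1})\frac{n^j}{j!}$ satisfies the same recursion with $b_j=\frac1{j+1}\binom{n-1}{j}$ and the same initial value $\frac{n-1}{2}$; this is workable but the induction on the degenerate-Bernoulli side is messier and less illuminating.
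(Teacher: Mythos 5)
Your proposal is correct, and it takes a genuinely different route from the paper. The paper's proof is essentially two lines: writing $1=(q^i+(1-q^i))^{s-1}$ and expanding binomially inside each summand gives $\mathfrak Z_n(q;1,s)=\sum_{j=1}^s\binom{s-1}{j-1}z_n(q;j)/(1-q)^j$ (the denominator printed in the paper is $1-q$, evidently a typo for $(1-q)^j$), after which the theorem follows immediately by quoting the evaluation $z_n(\zeta_n;j)/\bigl(n(1-\zeta_n)\bigr)^j=-\beta_j(n^{-1})/j!$ from \cite[(2.6)]{BTT18}. You instead bypass the series $z_n$ entirely: you compute the generating function $F(t)=\sum_{s\ge1}\mathfrak Z_n(\zeta_n;1,s)t^{s-1}$ in closed form from the cyclotomic factorization $\prod_{i=1}^{n-1}(x-\zeta_n^i)=(x^n-1)/(x-1)$, compute the generating function of the claimed right-hand side as a binomial transform of the degenerate Bernoulli series, and observe that the two coincide; I checked the algebra (including the cancelling $1/t$ terms and the legitimacy of the substitution $w=t/(1-t)$) and it is right. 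The trade-off: the paper's argument is shorter but imports its entire substance from \cite{BTT18}, whereas yours is self-contained and in effect re-derives the depth-one case of that cited identity; your closed form for $F(t)$ is also a reusable byproduct. You also correctly diagnosed that the theorem, though stated for general $q$, is really an identity at $q=\zeta_n$, since the right-hand side is $q$-free --- the paper's own proof confirms this reading.
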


\begin{proof}  
The function $\mathfrak Z_n(q;1,s)$ can be written in terms of $z_n(q;s_1,\dots,s_m)$ as $m=1$.  
$$ 
\mathfrak Z_n(q;1,s)=\sum_{j=1}^s\binom{s-1}{j-1}\frac{z_n(q;j)}{1-q}\,. 
$$ 
\label{rel:qmzv-btt} 
Since 
$$
\frac{z_n(\zeta_n;j)}{\bigl(n(1-\zeta_n)\bigr)^j}=-\frac{\beta_j(n^{-1})}{j!}
$$
\cite[(2.6)]{BTT18}, 
we have the desired result. 
\end{proof}


\section{The case $s=2$} 

In this section, we shall show an explicit formula for $\mathfrak Z_n(\zeta_n;m,2)$.   
Notice that the values $z_n(\zeta_n;\underbrace{s,\dots,s}_m)$ in (\ref{def:btt}) are explicitly given (see \cite[Theorem 1.1]{BTT20} for $m=1,2,3$).   

\begin{theorem} 
For a positive integer $m$, we have 
$$
\mathfrak Z_n(\zeta_n;m,2)=\frac{1}{n(m+1)}\left(\binom{n-1}{m}+(-1)^m\binom{n-1}{2 m+1}\right)\,. 
$$ 
\label{th:s2}
\end{theorem}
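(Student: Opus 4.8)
The plan is to combine Theorem~\ref{th:zz-bell} (or equivalently Theorem~\ref{th:zz-det}) with the known $m=1$ values for the companion series, just as the analogous identity \eqref{eq:zzm1} for $s=1$ must follow from the corresponding $m=1$ input. Concretely, Theorem~\ref{th:zz-bell} expresses $\mathfrak Z_n(\zeta_n;m,2)$ as $\tfrac{1}{m!}\mathbf Y_m$ applied to the sequence $\bigl(\mathfrak Z_n(\zeta_n;1,2),-1!\,\mathfrak Z_n(\zeta_n;1,4),2!\,\mathfrak Z_n(\zeta_n;1,6),\dots\bigr)$, i.e. it reduces everything to the single-variable values $\mathfrak Z_n(\zeta_n;1,2\nu)$ for $\nu=1,\dots,m$. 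First I would record these: by Theorem~\ref{th:dgber} specialized to $q=\zeta_n$ and $s=2\nu$, together with the formula $z_n(\zeta_n;j)/\bigl(n(1-\zeta_n)\bigr)^j=-\beta_j(n^{-1})/j!$ from \cite[(2.6)]{BTT18}, one gets a closed form for $\mathfrak Z_n(\zeta_n;1,2\nu)$ in terms of degenerate Bernoulli numbers $\beta_j(n^{-1})$. Rather than substituting into the Bell polynomial directly, it is cleaner to pass to generating functions.

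The key step is to translate the Bell-polynomial identity into an identity of formal power series in a variable $t$. Writing $F(t)=1+\sum_{m\ge1}\mathfrak Z_n(\zeta_n;m,s)t^m$ and using the defining exponential formula for $\mathbf Y_m$, Theorem~\ref{th:zz-bell} says
\[
1+\sum_{m\ge1}\mathfrak Z_n(\zeta_n;m,s)\,t^m=\exp\!\left(\sum_{\nu\ge1}\frac{(-1)^{\nu-1}}{\nu}\,\mathfrak Z_n(\zeta_n;1,\nu s)\,t^\nu\right),
\]
which is just the statement that $\sum_{\nu\ge1}(-1)^{\nu-1}\mathfrak Z_n(\zeta_n;1,\nu s)t^\nu/\nu$ is $\log$ of the generating function of the $\mathfrak Z_n(\zeta_n;m,s)$; this is exactly the classical Newton-type relation between $\mathfrak Z_n(\zeta_n;1,\nu s)$ (a ``power sum'') and $\mathfrak Z_n(\zeta_n;m,s)$ (an ``elementary symmetric function''), applied to the multiset $\{1/(1-\zeta_n^i)^s : 1\le i\le n-1\}$. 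So the cleanest route is to prove the $s=2$ case \emph{directly} at the level of this generating function: I want to show
\[
\prod_{i=1}^{n-1}\left(1+\frac{t}{(1-\zeta_n^i)^2}\right)=\sum_{m\ge0}\frac{1}{n(m+1)}\left(\binom{n-1}{m}+(-1)^m\binom{n-1}{2m+1}\right)t^m .
\]
The left side is a polynomial in $t$ of degree $n-1$; after the substitution $1+t/(1-\zeta_n^i)^2=\bigl((1-\zeta_n^i)^2+t\bigr)/(1-\zeta_n^i)^2$, the numerator is $\prod_{i=1}^{n-1}\bigl(t+(1-\zeta_n^i)^2\bigr)$ and the denominator is $\prod_{i=1}^{n-1}(1-\zeta_n^i)^2=n^2$ (since $\prod_{i=1}^{n-1}(1-\zeta_n^i)=n$). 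Thus the problem becomes the evaluation of the single polynomial $P(t):=\prod_{i=1}^{n-1}\bigl(t+(1-\zeta_n^i)^2\bigr)$, and the claim is equivalent to an explicit formula for its coefficients.

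To evaluate $P(t)$ I would exploit that $1-\zeta_n^i$ runs over the nonzero roots of $\bigl((1-x)^n-1\bigr)/(-x)$, or more usefully that $(1-\zeta_n^i)^2$ runs over values that can be organized via the transformation $x\mapsto(1-x)^2$; writing $t=-u^2$ or $t = (1-w)^2-\text{something}$ one can try to identify $P$ with (a shift of) a cyclotomic-type polynomial. A slicker approach: $P(t)$ evaluated at $t=-(1-w)^2$ gives $\prod_{i=1}^{n-1}\bigl((1-\zeta_n^i)^2-(1-w)^2\bigr)=\prod_{i=1}^{n-1}(w-\zeta_n^i)(2-w-\zeta_n^i)$, and each factor is a value of $\prod_i(X-\zeta_n^i)=(X^n-1)/(X-1)$; so $P(-(1-w)^2)=\dfrac{(w^n-1)\bigl((2-w)^n-1\bigr)}{(w-1)(1-w)}=-\dfrac{(w^n-1)\bigl((2-w)^n-1\bigr)}{(1-w)^2}$. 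Expanding both sides in powers of $w$ (or of $1-w$) and matching coefficients then yields the binomial expression; the two binomial terms $\binom{n-1}{m}$ and $(-1)^m\binom{n-1}{2m+1}$ should emerge respectively from the ``$w^n$'' interaction and from the cross term, after dividing by $(1-w)^2$ and reading off the coefficient of $t^m$.

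The main obstacle I anticipate is purely bookkeeping: carefully tracking the change of variables $t\leftrightarrow w$ and the division by $(1-w)^2$ so that the coefficient extraction produces exactly $\tfrac{1}{n(m+1)}\bigl(\binom{n-1}{m}+(-1)^m\binom{n-1}{2m+1}\bigr)$ and not some equivalent-looking but differently-indexed sum; in particular one must check the degree bookkeeping (the right-hand polynomial in $t$ has degree $n-1$, while $\binom{n-1}{2m+1}$ vanishes for $2m+1>n-1$, i.e. for $m\ge n/2$, which is consistent). As a sanity check I would verify $m=0$ (both sides $=1$), $m=1$ (which must reproduce $\mathfrak Z_n(\zeta_n;1,2)=-(n-1)(n-5)/12$ from the list above, since $\tfrac{1}{2n}\bigl(\binom{n-1}{1}-\binom{n-1}{3}\bigr)$ should equal that), and the top coefficient $m=n-1$. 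If the direct product evaluation proves awkward, the fallback is to substitute the Theorem~\ref{th:dgber} values for $\mathfrak Z_n(\zeta_n;1,2\nu)$ into Theorem~\ref{th:zz-bell} and push through the Bell-polynomial/degenerate-Bernoulli algebra, using the techniques of Bachmann et al.\ referenced in the introduction to control the resulting sums; but the cyclotomic-product argument above should be both shorter and more transparent.
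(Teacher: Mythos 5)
Your proposal is correct, and I have checked that it closes: the route you sketch does produce the stated formula. You share the paper's central idea --- write $\sum_m \mathfrak Z_n(\zeta_n;m,2)t^m=\frac{1}{n^2}\prod_{j=1}^{n-1}\bigl((1-\zeta_n^j)^2+t\bigr)$ and factor the quadratic $(1-Y)^2+t$ into linear factors in $Y$ so that $\prod_j(\alpha-\zeta_n^j)=(\alpha^n-1)/(\alpha-1)$ applies --- but you diverge at the extraction step. The paper multiplies by $nY^n$, sums over $n$, and converts the products $\prod_i(\alpha_i^n-1)$ into logarithms $\log\bigl(1-\frac{XY}{1-Y}\bigr)-\log\bigl(1+\frac{XY^2}{(1-Y)^2}\bigr)$, whose power-series expansions in $X$ deliver the two binomial terms. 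You instead stay at fixed $n$, parametrize $t=-(1-w)^2=-u^2$ so the roots are $w=1-u$ and $2-w=1+u$, and obtain the closed form $P(-u^2)=-\frac{\bigl((1-u)^n-1\bigr)\bigl((1+u)^n-1\bigr)}{u^2}$; since this is even in $u$, the coefficient of $t^m$ is $(-1)^m$ times the coefficient of $u^{2m}$, and the identity $\bigl((1-u)^n-1\bigr)\bigl((1+u)^n-1\bigr)=(1-u^2)^n-(1-u)^n-(1+u)^n+1$ gives $[t^m]P(t)=\binom{n}{m+1}+2(-1)^m\binom{n}{2m+2}$, which divided by $n^2$ is exactly the claimed value. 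This last identity is the one piece of ``bookkeeping'' your write-up leaves implicit, and you should spell it out, but it is elementary. Your version is shorter and avoids the second generating variable and the logarithm entirely; the paper's version is heavier for $s=2$ but scales better to $s\ge 3$, where the roots $\alpha_i$ of $(1-Y)^s+X$ no longer admit such a clean rational parametrization and the symmetric-function/logarithm machinery (the polynomials $F_{s,l}$) is what carries the computation, as in the paper's Theorem \ref{th:s3}.
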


\noindent 
{\it Remark.}   
For $m=1,2,3,4$ we have 
\begin{align*}
\mathfrak Z_n(\zeta_n;1,2)&=-\frac{2(n-1)(n-5)}{4!}\,,\\
\mathfrak Z_n(\zeta_n;2,2)&=\frac{2(n-1)(n-2)(n^2-12 n+47)}{6!}\,,\\
\mathfrak Z_n(\zeta_n;3,2)&=-\frac{2(n-1)(n-2)(n-3)(n^3-22 n^2+179 n-638)}{8!}\,,\\
\mathfrak Z_n(\zeta_n;4,2)&=\frac{2(n-1)(n-2)(n-3)(n-4)(n^4-35 n^3+485 n^2-3325 n+11274)}{10!}\,,\\
\end{align*} 
It is interesting to see that the coefficients of the higher order polynomial factors, $n-5$, $n^2-12 n+47$, $n^3-22 n^2+179 n-638$, $n^4-35 n^3+485 n^2-3325 n+11274$, $\dots$, are included in the sequences \cite[A177938,A054655]{oeis}, which can be written in terms of the $r$-Stirling numbers of the first kind (see Corollary \ref{cor:s2}).

\begin{proof} 
By (\ref{def:qmzv}), using $\prod_{j=1}^{n-1}(1-\zeta_n^j)=n$, we have 
\begin{align*}
\sum_{m=0}^{n-1}\mathfrak Z_n(\zeta_n;m,s)X^m&=\sum_{m=0}^{n-1}\sum_{1\le i_1<i_2<\dots<i_m\le n-1}\frac{1}{(1-\zeta_n^{i_1})^{s}(1-\zeta_n^{i_2})^{s}\cdots(1-\zeta_n^{i_m})^{s}}X^m\\
&=\prod_{j=1}^{n-1}\left(1+\frac{X}{(1-\zeta_n^j)^s}\right)=\frac{1}{n^s}\prod_{j=1}^{n-1}\bigl((1-\zeta_n^j)^s+X\bigr)\,. 
\end{align*}
Here, we assume that $\mathfrak Z_n(\zeta_n;0,s)=1$. 
Let $\alpha_i$ ($i=1,2,\dots,s$) be the roots of the polynomial $(1-Y)^s+X$. Namely, 
\begin{equation}
\prod_{i=1}^s(\alpha_i-Y)=(1-Y)^s+X\,. 
\label{eq:alpha}
\end{equation}
For the elementary symmetric polynomials $e_j(x_1,\dots,x_s)$, defined by 
$$
\sum_{j=0}^s(-1)^j e_j(x_1,\dots,x_s)T^{s-j}:=\prod_{j=1}^s(T-x_j)\,, 
$$ 
we see that 
\begin{equation} 
e_j(\alpha_1,\dots,\alpha_s)=\binom{s}{j}+\delta_{j,s}X\,. 
\label{eq:elemensp}
\end{equation}
Since by (\ref{eq:alpha}) 
$$
\prod_{i=1}^s(\alpha_i-1)=X\,,  
$$ 
we get 
$$
\prod_{j=1}^{n-1}\prod_{i=1}^s(\alpha_i-\zeta_n^j)=\prod_{i=1}^s\frac{\alpha_i^n-1}{\alpha_i-1}=\frac{1}{X}\prod_{i=1}^s(\alpha_i^n-1)\,. 
$$
Thus, we have 
\begin{align*}
&\sum_{n=1}^\infty n^{s-1}Y^n\sum_{m=0}^{n-1}\mathfrak Z_n(\zeta_n;m,s)X^m=\sum_{n=1}^\infty n^{s-1}Y^n\frac{1}{n^s}\prod_{j=1}^{n-1}\bigl(1-\zeta_n^j)^s+X\bigr)\\
&=\sum_{n=1}^\infty\frac{Y^n}{n}\prod_{j=1}^{n-1}\prod_{i=1}^s(\alpha_i-\zeta_n^j)=\frac{1}{X}\sum_{n=1}^\infty\frac{Y^n}{n}\prod_{i=1}^s(\alpha_i^n-1)\\
&=\frac{1}{X}\sum_{n=1}^\infty\frac{Y^n}{n}\left((-1)^s+\sum_{l=1}^s(-1)^{s-l}\sum_{1\le i_1<\dots<i_l\le s}(\alpha_{i_1}\cdots\alpha_{i_l})^n\right)\\
&=\frac{(-1)^{s-1}}{X}\left(-\sum_{n=1}^\infty\frac{Y^n}{n}+\sum_{l=1}^s(-1)^{l-1}\sum_{1\le i_1<\dots<i_l\le s}\sum_{n=1}^\infty\frac{(\alpha_{i_1}\cdots\alpha_{i_l}Y)^n}{n}\right)\\
&=\frac{(-1)^{s-1}}{X}\left(\log(1-Y)+\sum_{l=1}^s(-1)^{l}\sum_{1\le i_1<\dots<i_l\le s}\log(1-\alpha_{i_1}\cdots\alpha_{i_l}Y)\right)\,. 
\end{align*}
Using the polynomials $F_{s,l}(X,Y)$, defined by 
\begin{align*}
F_{s,0}(X,Y)&=1-Y\quad(s\ge 1)\,,\\
F_{s,l}(X,Y)&=\prod_{1\le i_1<\cdots<i_l\le s}(1-\alpha_{i_1}\cdots\alpha_{i_l}Y)\quad(1\le l\le s)\,,
\end{align*}
we have 
\begin{equation}  
\sum_{n=1}^\infty n^{s-1}Y^n\sum_{m=0}^{n-1}\mathfrak Z_n(\zeta_n;m,s)X^m=\frac{(-1)^{s-1}}{X}\log\left(\prod_{l=0}^s F_{s,l}(X,Y)^{(-1)^l}\right)\,. 
\label{eq:log-f}
\end{equation}
The coefficients of $Y^m$ in $F_{s,l}(X,Y)$ can be written in terms of the elementary symmetric polynomials $e_j(\alpha_1,\dots,\alpha_s)$ in (\ref{eq:elemensp}). 
When $s=2$, by (\ref{eq:elemensp}) we get $\alpha_1+\alpha_2=2$ and $\alpha_1\alpha_2=X+1$. Hence, 
$$
F_{2,0}=1-Y,\quad F_{2,1}=(1-Y)^2+X Y^2,\quad F_{2,2}=1-(X+1)Y\,. 
$$
Substituting these values into (\ref{eq:log-f}), we have 
\begin{align*}
&\sum_{n=1}^\infty n Y^n\sum_{m=0}^{n-1}\mathfrak Z_n(\zeta_n;m,2)X^m=\sum_{m=0}^\infty X^m\sum_{n=m+1}^\infty n\mathfrak Z_n(\zeta_n;m,2)Y^n\\
&=-\frac{1}{X}\log\left(\prod_{l=0}^2 F_{2,l}(X,Y)^{(-1)^l}\right)\\
&=-\frac{1}{X}\log\left(\frac{(1-Y)\bigl(1-(X+1)Y\bigr)}{(1-Y)^2+X Y^2}\right)\\
&=-\frac{1}{X}\left(\log\left(1-\frac{X Y}{1-Y}\right)-\log\left(1+\frac{X Y^2}{(1-Y)^2}\right)\right)\\
&=\frac{1}{X}\left(\sum_{m=1}^\infty\frac{1}{m}\left(\frac{X Y}{1-Y}\right)^m+\sum_{m=1}^\infty\frac{(-1)^{m-1}}{m}\left(\frac{X Y^2}{(1-Y)^2}\right)^m\right)\\
&=\sum_{m=1}^\infty\frac{X^{m-1}Y^m}{m}\sum_{j=0}^\infty\binom{m+j-1}{m-1}Y^j+\sum_{m=1}^\infty\frac{(-1)^{m-1}X^{m-1}Y^{2 m}}{m}\sum_{j=0}^\infty\binom{2 m+j-1}{2 m-1}Y^j\\
&=\sum_{m=0}^\infty\frac{X^{m}}{m+1}\sum_{j=0}^\infty\binom{m+j}{m}Y^{m+j+1}+\sum_{m=0}^\infty\frac{(-1)^{m}X^{m}}{m+1}\sum_{j=0}^\infty\binom{2 m+j+1}{2 m+1}Y^{2 m+j+2}\\
&=\sum_{m=0}^\infty\frac{X^{m}}{m+1}\sum_{n=m+1}^\infty\binom{n-1}{m}Y^{n}+\sum_{m=0}^\infty\frac{(-1)^{m}X^{m}}{m+1}\sum_{n=2 m+2}^\infty\binom{n-1}{2 m+1}Y^{n}\,. 
\end{align*}
Comparing the coefficients on both sides, we obtain that 
$$
\mathfrak Z_n(\zeta_n;m,2)=\frac{1}{n(m+1)}\left(\binom{n-1}{m}+(-1)^m\binom{n-1}{2 m+1}\right)\,. 
$$ 
\end{proof}

\subsection{A different expression}  

The expression of $\mathfrak Z_n(\zeta_n;m,2)$ in Theorem \ref{th:s2} can be written in terms of the $r$-Stirling numbers of the first kind,  
 introduced by Broder \cite{Broder} as   
\begin{align}
x^r(x-r)(x-r-1)\cdots(x-n+1)&=\sum_{k=0}^n(-1)^{n-k}\sttf2{n}{k}_1^{(r,1)}x^k\notag\\
&=\sum_{k=r}^n(-1)^{n-k}\sttf2{n}{k}_1^{(r,1)}x^k\,.
\label{def:rst1}
\end{align}  
That is the special case where $s=1$ and $q\to 1$ in (\ref{def:qrsth1}). 

By Lemma \ref{lem:exp11}, we have 
\begin{align*} 
\sttf2{2 m+2}{m+k+2}_1^{(m+1,1)}&=\sum_{m+1\le i_1<\dots<i_{m-k}\le 2 m+1}i_1\cdots i_{m-k}\\
&=\frac{(2 m+1)!}{m!}\sum_{m+1\le i_1<\dots<i_{k+1}\le 2 m+1}\frac{1}{i_1\cdots i_{k+1}}\,.
\end{align*}
In particular, we get 
$$
\sttf2{2 m+2}{m+2}_1^{(m+1,1)}=\frac{(2 m+1)!}{m!}(H_{2 m+1}-H_{m})=(m+1)h_{m+1}^{(m+1)}\,, 
$$ 
where $h_n^{(k)}$ is the hyperharmonic function, defined by 
$$
h_n^{(k)}=\sum_{i=1}^n h_{i}^{(k-1)}\quad(k\ge 2)\quad\hbox{with}\quad h_n^{(1)}=H_n 
$$
and $H_n=\sum_{i=1}^n 1/i$ is the $n$-th harmonic number. 

By Theorem \ref{th:s2}, we can show the following.  

\begin{Cor} 
For a positive integer $m$, we have 
$$
\mathfrak Z_n(\zeta_n;m,2)=\frac{2\cdot m!}{(2 m+2)!}\binom{n-1}{m}\sum_{k=0}^m\sttf2{2 m+2}{m+k+2}_1^{(m+1,1)}(-n)^k\,. 
$$ 
\label{cor:s2}
\end{Cor}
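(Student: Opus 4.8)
The plan is to reduce the claimed identity to Theorem \ref{th:s2} by rewriting the Stirling sum on the right-hand side as an explicit polynomial in $n$ and matching it against $\binom{n-1}{m}+(-1)^m\binom{n-1}{2m+1}$. The bridge is the defining expansion \eqref{def:rst1} of the $r$-Stirling numbers of the first kind, applied to the $r$-factorial $(x)_{2m+2,1}^{(m+1,1)}=x^{m+1}\prod_{i=m+1}^{2m+1}(x-i)$, i.e.\ with $r=m+1$ and size $2m+2$.

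First I would record, from \eqref{def:rst1},
$$\prod_{i=m+1}^{2m+1}(x-i)=\sum_{j=0}^{m+1}(-1)^{m+1-j}\sttf2{2m+2}{m+1+j}_1^{(m+1,1)}x^{j}\,,$$
and then specialize $x=n$. The $j=0$ term is controlled by the initial value $\sttf2{2m+2}{m+1}_1^{(m+1,1)}=(2m+1)!/m!$, so separating it off and reindexing $j=k+1$ (with the sign bookkeeping $(-1)^{m+1-j}=(-1)^m(-1)^k$) yields
$$\prod_{i=m+1}^{2m+1}(n-i)=(-1)^{m+1}\frac{(2m+1)!}{m!}+(-1)^m n\sum_{k=0}^{m}\sttf2{2m+2}{m+k+2}_1^{(m+1,1)}(-n)^{k}\,.$$
(The same relation can be obtained from the combinatorial evaluation of $\sttf2{2m+2}{m+k+2}_1^{(m+1,1)}$ recorded just before the statement, by recognizing the inner sums as the elementary symmetric functions of $1/(m+1),\dots,1/(2m+1)$ and using $\prod_{i=m+1}^{2m+1}(1-n/i)$; either route is routine.) Solving for the Stirling sum gives
$$\sum_{k=0}^{m}\sttf2{2m+2}{m+k+2}_1^{(m+1,1)}(-n)^{k}=\frac{(2m+1)!}{n\,m!}+\frac{(-1)^m}{n}\prod_{i=m+1}^{2m+1}(n-i)\,.$$

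Next I would substitute this into the right-hand side of the corollary and treat the two resulting terms separately. The term coming from $(2m+1)!/(n\,m!)$ contributes $\frac{2\,m!}{(2m+2)!}\cdot\frac{(2m+1)!}{m!}\cdot\frac1n\binom{n-1}{m}=\frac{1}{n(m+1)}\binom{n-1}{m}$, using $\frac{2(2m+1)!}{(2m+2)!}=\frac{1}{m+1}$. For the other term, the falling-factorial cancellation
$$\prod_{i=m+1}^{2m+1}(n-i)=\frac{(2m+1)!}{m!}\cdot\frac{\binom{n-1}{2m+1}}{\binom{n-1}{m}}$$
turns it into $\frac{(-1)^m}{n(m+1)}\binom{n-1}{2m+1}$. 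Adding the two pieces produces exactly $\frac{1}{n(m+1)}\bigl(\binom{n-1}{m}+(-1)^m\binom{n-1}{2m+1}\bigr)=\mathfrak Z_n(\zeta_n;m,2)$ by Theorem \ref{th:s2}.

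The argument is purely elementary once Theorem \ref{th:s2} is available; there is no analytic content. The only delicate point — and the main potential source of error — is the sign and index bookkeeping when passing between the $r$-Stirling expansion in $x$, the extraction of its constant term, and the two binomial coefficients, together with verifying that the prefactor $2\,m!/(2m+2)!$ collapses to $1/(m+1)$ in both terms. A numerical check against the $m=1,2,3,4$ values listed in the Remark following Theorem \ref{th:s2} confirms the normalization.
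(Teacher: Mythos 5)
Your proposal is correct and is essentially the paper's own argument: both reduce the identity to Theorem \ref{th:s2} by expanding $x^{m+1}\prod_{i=m+1}^{2m+1}(x-i)$ via \eqref{def:rst1} at $x=n$, peeling off the constant term $\sttf2{2m+2}{m+1}_1^{(m+1,1)}=(2m+1)!/m!$, and matching the remaining polynomial against $(-1)^m\binom{n-1}{2m+1}$ after absorbing $\binom{n-1}{m}$. The only (immaterial) difference is direction — you start from the Stirling sum and recover the binomial form, while the paper rewrites the binomial form as the Stirling sum — and your sign bookkeeping checks out.
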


\noindent 
{\it Remark.}  
Note that  the coefficients of the higher order polynomial factors correspond to the sum of the $r$-Stirling numbers of the first kind.    
For example, when $m=4$, we can see that  
\begin{multline*}
n^4-35 n^3+485 n^2-3325 n+11274\\
=\sttf2{10}{10}_1^{(5,1)}n^4-\sttf2{10}{9}_1^{(5,1)} n^3+\sttf2{10}{8}_1^{(5,1)}n^2-\sttf2{10}{7}_1^{(5,1)}n+\sttf2{10}{6}_1^{(5,1)}\,.
\end{multline*} 
From the representation of the $r$-Stirling numbers of the first kind, we can also have 
$$ 
\mathfrak Z_n(\zeta_n;m,2)=\frac{1}{n}\binom{n}{m+1}\sum_{k=0}^m\sum_{m+1\le i_1<\dots<i_{k+1}\le 2 m+1}\frac{(-n)^k}{i_1\cdots i_{k+1}}\,.
$$

\begin{proof}  
By Theorem \ref{th:s2}, from the definition of the $r$-Stirling numbers of the first kind, we have 
\begin{align*}
&\mathfrak Z_n(\zeta_n;m,2)\\
&=\frac{(n-1)(n-2)\cdots(n-m)}{n(m+1)(2 m+1)!}\left(\underbrace{(m+1)(m+2)\cdots(2 m+1)}_{m+1}\right.\\
&\qquad\qquad\qquad\left.+(-1)^m\underbrace{(n-m-1)(n-m-2)\cdots(n-2 m-1)}_{m+1}\right)\\
&=\frac{2\cdot m!}{n(2 m+2)!}\binom{n-1}{m}(-1)^m\sum_{k=m+2}^{2 m+2}(-1)^{m+k}\sttf2{2 m+2}{k}_1^{(m+1,1)}n^{k-m-1}\\
&=\frac{2\cdot m!}{(2 m+2)!}\binom{n-1}{m}\sum_{k=0}^m\sttf2{2 m+2}{m+k+2}_1^{(m+1,1)}(-n)^k\,. 
\end{align*}
\end{proof}

\section{The case $s=3$} 

By a similar technique as in the proof of Theorem \ref{th:s2}, we can also obtain an explicit formula of $\mathfrak Z_n(\zeta_n;m,3)$.   

\begin{theorem} 
For a positive integer $m$, we have 
\begin{align*}
&\mathfrak Z_n(\zeta_n;m,3)\\
&=\frac{1}{n^2(m+1)}\left(\binom{n-1}{m}+\binom{n-1}{3 m+2}\right)\\
&\quad -\frac{1}{n^2}\sum_{k=0}^{\fl{\frac{m+1}{2}}}\sum_{i=0}^{m-2 k+1}\frac{1}{m-k+1}\binom{m-k+1}{k}\binom{m-2 k+1}{i}\binom{n+m-2 k-i}{3 m-3 k+2}2^i(-3)^{m-2 k-i+1}\,. 
\end{align*} 
\label{th:s3}
\end{theorem}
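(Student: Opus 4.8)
The plan is to rerun, with $s=3$, the generating-function computation that proved Theorem~\ref{th:s2}. The starting point is the master identity (\ref{eq:log-f}), so the first task is to write down the four polynomials $F_{3,l}(X,Y)$ explicitly. By (\ref{eq:elemensp}) we have $e_1(\alpha_1,\alpha_2,\alpha_3)=3$, $e_2=3$, $e_3=1+X$, hence $F_{3,0}=1-Y$, $F_{3,1}=\prod_{i=1}^3(1-\alpha_iY)=(1-Y)^3-XY^3$, and $F_{3,3}=1-(1+X)Y$. For $F_{3,2}=\prod_{1\le i<j\le 3}(1-\alpha_i\alpha_jY)$ I would use $\alpha_i\alpha_j=(1+X)/\alpha_k$ together with the defining relation (\ref{eq:alpha}) to get $F_{3,2}=\frac{1}{1+X}\prod_{i=1}^3\bigl(\alpha_i-(1+X)Y\bigr)=\frac{(1-(1+X)Y)^3+X}{1+X}$, which expands to
\[
F_{3,2}=1-3Y+3(1+X)Y^2-(1+X)^2Y^3=(1-Y)^3+XY^2\bigl(3-(2+X)Y\bigr).
\]

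Next I would substitute into $\sum_{n\ge 1}n^2Y^n\sum_{m=0}^{n-1}\mathfrak Z_n(\zeta_n;m,3)X^m=\tfrac{1}{X}\log\bigl(F_{3,0}F_{3,2}/(F_{3,1}F_{3,3})\bigr)$ and split the logarithm. Factoring $(1-Y)^3$ out of $F_{3,1}$ and $F_{3,2}$ makes the three copies of $\log(1-Y)$ cancel, and combining $\log(1-Y)-\log(1-(1+X)Y)=-\log\bigl(1-\tfrac{XY}{1-Y}\bigr)$ leaves
\begin{multline*}
\frac{1}{X}\log\frac{F_{3,0}F_{3,2}}{F_{3,1}F_{3,3}}=-\frac{1}{X}\log\!\Bigl(1-\frac{XY}{1-Y}\Bigr)-\frac{1}{X}\log\!\Bigl(1-\frac{XY^{3}}{(1-Y)^{3}}\Bigr)\\
+\frac{1}{X}\log\!\Bigl(1+\frac{XY^{2}\bigl(3-(2+X)Y\bigr)}{(1-Y)^{3}}\Bigr).
\end{multline*}
Expanding each logarithm by $-\log(1-u)=\sum_{p\ge 1}u^p/p$, expanding $(1-Y)^{-k}=\sum_{j\ge 0}\binom{k-1+j}{k-1}Y^j$, and---in the third term only---applying the binomial theorem to $\bigl(3-(2+X)Y\bigr)^p$ and then to $(2+X)^a$, one extracts the coefficient of $X^mY^n$ and divides by $n^2$. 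The first logarithm gives $\tfrac{1}{n^2(m+1)}\binom{n-1}{m}$, the second gives $\tfrac{1}{n^2(m+1)}\binom{n-1}{3m+2}$, which together form the first line of the claimed formula.

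For the third logarithm I would track a generic monomial as $X^{p-1}X^{b}$ times powers of $Y$, where $b$ is the exponent drawn from $(2+X)^a$; the matching $b=m-p+1$ with the constraints $0\le b\le a\le p$ forces $\lceil(m+1)/2\rceil\le p\le m+1$. Setting $k:=m-p+1$ (so $0\le k\le\lfloor(m+1)/2\rfloor$) and $a=:k+i$ (so $0\le i\le m-2k+1$), the identity $\binom{p}{a}\binom{a}{k}=\binom{p}{k}\binom{p-k}{a-k}$ recasts the binomial factors as $\binom{m-k+1}{k}\binom{m-2k+1}{i}$, the $(1-Y)^{-3p}$ expansion produces $\binom{n+m-2k-i}{3m-3k+2}$, while $2^{a-k}=2^i$ and $\tfrac1p=\tfrac1{m-k+1}$ survive. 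The remaining sign-and-$3$-power is $(-1)^{p-1+a}3^{p-a}=(-1)^{m+i}3^{m-2k-i+1}=-(-3)^{m-2k-i+1}$, which supplies both the factor $(-3)^{m-2k-i+1}$ and the overall minus sign in front of the double sum. Comparing coefficients of $X^mY^n$ then gives the theorem.

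I expect the main obstacle to be precisely this bookkeeping for the third logarithm: reading off the summation ranges $0\le k\le\lfloor(m+1)/2\rfloor$ and $0\le i\le m-2k+1$ from $0\le b\le a\le p$, and checking that $(-1)^{p-1+a}3^{p-a}$ collapses to $-(-3)^{m-2k-i+1}$ so the last group of terms carries the displayed minus sign. A secondary point, needed before any of this, is obtaining the closed form $F_{3,2}=(1-Y)^3+XY^2\bigl(3-(2+X)Y\bigr)$, which relies on the identity $\prod_{i=1}^3(\alpha_i-Z)=(1-Z)^3+X$ coming from (\ref{eq:alpha}).
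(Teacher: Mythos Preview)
Your proof is correct and follows essentially the same route as the paper: compute the $F_{3,l}$, substitute into (\ref{eq:log-f}), split the logarithm into three pieces, and extract the coefficient of $X^mY^n$ via two nested binomial expansions of the third piece. Your derivation of $F_{3,2}$ through $\alpha_i\alpha_j=(1+X)/\alpha_k$ and (\ref{eq:alpha}) is a clean touch the paper omits, and your expansion of $\bigl(3-(2+X)Y\bigr)^p$ (versus the paper's $(XY+2Y-3)^p$) requires the extra trinomial identity $\binom{p}{a}\binom{a}{k}=\binom{p}{k}\binom{p-k}{a-k}$ but lands on exactly the same double sum.
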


\noindent 
{\it Remark.}  
For $m=1,2,3,4$, we have 
\begin{align*}
\mathfrak Z_n(\zeta_n;1,3)&=-\frac{(n-1)(n-3)}{8}\,,\\ 
\mathfrak Z_n(\zeta_n;2,3)&=\frac{6(n-1)(n-2)(n^4+3 n^3+301 n^2-2883 n+6898)}{9!}\,,\\ 
\mathfrak Z_n(\zeta_n;3,3)&=-\frac{3(n-1)(n-2)(n-3)(n^5-4 n^4+100 n^3-2290 n^2+15019 n-32986)}{10!}\,,\\ 
\mathfrak Z_n(\zeta_n;4,3)&=\frac{2(n-1)(n-2)(n-3)(n-4)}{5\cdot 14!}(n^8+10 n^7+3705 n^6-53340 n^5\\
&\quad +360423 n^4-7406910 n^3+99197195 n^2-551374960 n+1157817876)\,. 
\end{align*}

\begin{proof}  
When $s=3$, by (\ref{eq:elemensp}) we get $\alpha_1+\alpha_2+\alpha_3=3$, $\alpha_1\alpha_2+\alpha_1\alpha_3+\alpha_2\alpha_3=3$ and $\alpha_1\alpha_2\alpha_3=X+1$. Hence, 
\begin{align*}
&F_{3,0}=1-Y,\quad F_{3,1}=(1-Y)^3-X Y^3,\\
&F_{3,2}=(1-Y)^3-(X Y+2 Y-3)X Y^2,\quad F_{3,3}=(1-Y)-X Y\,. 
\end{align*}
Substituting these values into (\ref{eq:log-f}), we have 
\begin{align*}
&\sum_{n=1}^\infty n^2 Y^n\sum_{m=0}^{n-1}\mathfrak Z_n(\zeta_n;m,3)X^m=\sum_{m=0}^\infty X^m\sum_{n=m+1}^\infty n^2\mathfrak Z_n(\zeta_n;m,3)Y^n\\
&=\frac{1}{X}\log\left(\frac{(1-Y)\bigl((1-Y)^3-(X Y+2 Y-3)X Y^2\bigr)}{\bigl((1-Y)^3-X Y^3\bigr)\bigl((1-Y)-X Y\bigr)}\right)\\
&=\frac{1}{X}\left(\sum_{m=1}^\infty\frac{1}{m}\left(\frac{X Y^3}{(1-Y)^3}\right)^m+\sum_{m=1}^\infty\frac{1}{m}\left(\frac{X Y}{1-Y}\right)^m\right.\\
&\left.-\sum_{m=1}^\infty\frac{1}{m}\left(\frac{(X Y+2 Y-3)X Y^2}{(1-Y)^3}\right)^m
\right)\,.
\end{align*} 
Now, the part of the first two summations is given by 
\begin{align*}
&\frac{1}{X}\left(\sum_{m=1}^\infty\frac{X^m Y^{3 m}}{m}\sum_{j=0}^\infty\binom{3 m+j-1}{3 m-1}Y^j+\sum_{m=1}^\infty\frac{X^m Y^{m}}{m}\sum_{j=0}^\infty\binom{m+j-1}{m-1}Y^j\right)\\
&=\sum_{m=0}^\infty\frac{X^m}{m+1}\sum_{j=0}^\infty\binom{3 m+j+2}{3 m+2}Y^{3 m+j+3}+\sum_{m=0}^\infty\frac{X^m}{m+1}\sum_{j=0}^\infty\binom{m+j}{m}Y^{m+j+1}\\
&=\sum_{m=0}^\infty\frac{X^m}{m+1}\sum_{n=3 m+3}^\infty\binom{n-1}{3 m+2}Y^n+\sum_{m=0}^\infty\frac{X^m}{m+1}\sum_{n=m+1}^\infty\binom{n-1}{m}Y^n\,. 
\end{align*} 
The part of the last summation is given by 
\begin{align*}
&\frac{1}{X}\sum_{m=1}^\infty\frac{(X Y+2 Y-3)^m X^m Y^{2 m}}{m}\sum_{j=0}^\infty\binom{3 m+j-1}{3 m-1}Y^j\\
&=\sum_{\mu=0}^\infty\frac{1}{\mu+1}\sum_{k=0}^{\mu+1}\binom{\mu+1}{k}X^k Y^k(2 Y-3)^{\mu+1-k}X^{\mu}\sum_{j=0}^\infty\binom{3\mu+j+2}{3\mu+2}Y^{2\mu+j+2}\\
&=\sum_{m=0}^\infty X^m\sum_{k=0}^{\fl{\frac{m+1}{2}}}\frac{1}{m-k+1}\binom{m-k+1}{k}Y^k\sum_{i=0}^{m-2 k+1}\binom{m-2 k+1}{i}2^i Y^i(-3)^{m-2 k+1-i}\\
&\qquad\qquad\times\sum_{j=0}^\infty\binom{3 m-3 k+j+2}{3 m-3 k+2}Y^{2 m-2 k+j+2}\\ 
&=\sum_{m=0}^\infty X^m\sum_{k=0}^{\fl{\frac{m+1}{2}}}\sum_{i=0}^{m-2 k+1}\frac{1}{m-k+1}\binom{m-k+1}{k}\binom{m-2 k+1}{i}2^i(-3)^{m-2 k+1-i}\\
&\qquad\qquad\times\sum_{n=2 m-k+i+2}^\infty\binom{n+m-2 k-i}{3 m-3 k+2}Y^n\,.
\end{align*} 
Combining two parts together, we obtain the desired result.  
\end{proof}

\section{Further works}  

By a similar technique as in the proof of Theorem \ref{th:s2}, it seems possible to obtaint an explicit expression of $\mathfrak Z_n(\zeta_n;m,4)$ too. In fact, by Theorem \ref{th:zz-det} with Corollary \ref{cor:zz-det2}, we can get 
\begin{align*}
\mathfrak Z_n(\zeta_n;1,4)&=\frac{(n-1)(n^3+n^2-109 n+251)}{6!}\,,\\ 
\mathfrak Z_n(\zeta_n;2,4)&=\frac{2(n-1)(n-2)(n^6+3 n^5-148 n^4+810 n^3+12869 n^2-101613 n+188878)}{10!}\,.\\ 
\end{align*}
However, the calculation to obtain a general formula is more difficult than one might imagine.

\section*{Competing Interests}  
The author declares no competing interest.




\begin{thebibliography}{100}
\bibitem{BTT18} 
H. Bachmann, Y. Takeyama and K. Tasaka, {\em 
Cyclotomic analogues of finite multiple zeta values},  
Compositio Math. {\bf 154} (2018), 2701--2721. 

\bibitem{BTT20} 
H. Bachmann, Y. Takeyama and K. Tasaka, {\em 
Special values of finite multiple harmonic $q$-series at roots of unity},  
Algebraic combinatorics, resurgence, moulds and applications (CARMA). Vol. 2, 1--18.
IRMA Lect. Math. Theor. Phys., 32; 
EMS Publishing House, Berlin, 2020. 

\bibitem{Bradley}
D. M. Bradley, {\em  
Multiple $q$-zeta values}, 
J. Algebra {\bf 283}(2) (2005), 752--798. 

\bibitem{Broder}  
A. Z. Broder, {\em  
The $r$-Stirling numbers}, 
Discrete Math. {\bf 49} (1984), 241--259. 

\bibitem{CR17}  
Y. Cai and M. Readdy,  {\em  
$q$-Stirling numbers: A new view},  
Adv. Appl. Math. {\bf 86} (2017), 50--80.   

\bibitem{Carlitz56}
L. Carlitz, {\em 
A degenerate Staudt-Clausen theorem},  
Arch. Math. {\bf 7} (1956), 28--33.  

\bibitem{Comtet}  
L. Comtet, {\em  
Advanced combinatorics},  
Dordrecht, Holland, D. Reidel, 1974. 

\bibitem{Ernst} 
T. Ernst, {\em  
$q$-Stirling numbers, an umbral approach},  
Adv. Dyn. Syst. Appl. {\bf 3}(2) (2008), 251--282.  

\bibitem{HI17} 
M. E. Hoffman and K. Ihara, {\em 
Quasi-shuffle products revisited}, 
J. Algebra {\bf 481} (2017), 293--326.

\bibitem{Ko23}  
T. Komatsu, {\em 
On $s$-Stirling transform and poly-Cauchy numbers of the second kind with level $2$}, 
Aequationes Math. {\bf 97} (2023), no.1, 31--61.

\bibitem{Ko24}   
T. Komatsu, {\em  
On $q$-generalized $(r,s)$-Stirling numbers},  
Aequationes Math. {\bf 98} (2024), 1281--1304. doi:10.1007/s00010-023-00997-4  

\bibitem{KR}
T. Komatsu and J. L. Ramirez,  {\em
Some determinants involving incomplete Fubini numbers}, 
An. \c{S}tiin\c{t}. Univ. ``Ovidius'' Constan\c{t}a Ser. Mat. {\bf 26} (2018), no.3, 143--170. 

\bibitem{KRV1}  
T. Komatsu, J. L. Ramirez and D. Villamizar, {\em  
A combinatorial approach to the Stirling numbers of the first kind with higher level}, 
Stud. Sci. Math. Hung. {\bf 58} (2021), 293--307.  
https://doi.org/10.1556/012.2021.58.3.1500 

\bibitem{KRV2}  
T. Komatsu, J. L. Ramirez and D. Villamizar, {\em  
A combinatorial approach to the generalized central factorial numbers}, 
Mediterr. J. Math. {\bf 18} (2021), Article:192, 14 pages. 
DOI: 10.1007/s00009-021-01830-5 

\bibitem{LP19}
Z. Li and E. Pan, {\em 
Sum of interpolated finite multiple harmonic $q$-series},  
J. Number Theory {\bf 201} (2019), 148--175. 

\bibitem{Merca18}  
M. Merca,  {\em  
A $q$-analogue for sums of powers}, 
Acta Arith. {\bf 183}(2)  (2018), 185--189. 

\bibitem{Merca19}  
M. Merca,  {\em  
Some notes on the $(q,t)$-Stirling numbers},  
Discrete Math.  {\bf 342} (2019), 628--634.  

\bibitem{Muir} 
T. Muir, {\em 
The theory of determinants in the historical order of development}, 
Four volumes, Dover Publications, New York, 1960.  

\bibitem{OOZ}  
Y. Ohno, J. Okuda and W. Zudilin, {\em  
Cyclic $q$-MZSV sum}, 
J. Number Theory {\bf 132}(1) (2012), 144--155.

\bibitem{Schlesinger} 
K.-G. Schlesinger, {\em 
Some remarks on $q$-deformed multiple polylogarithms}, 
arXiv:math/0111022, Nov. 2001. 

\bibitem{oeis}  
N. J. A. Sloane, {\em  
The On-Line Encyclopedia of Integer Sequences},  
available at oeis.org.  (2024).  

\bibitem{Spivey} 
M. Z. Spivey,  {\em  
Combinatorial sums and finite differences}, 
Discrete Math. {\bf 307} (2007) 3130--3146. 

\bibitem{Tasaka21} 
K. Tasaka, {\em 
Finite and symmetric colored multiple zeta values and multiple harmonic $q$
-series at roots of unity}, 
Sel. Math., New Ser. {\bf 27} (2021), No. 2, Paper No. 21, 34 p.. 

\bibitem{Trudi}  
N. Trudi, {\em  
Intorno ad alcune formole di sviluppo},  
Rendic. dell' Accad. Napoli,  
(1862), 135--143.  

\bibitem{Tweedie}  
C. Tweedie, {\em 
The Stirling numbers and polynomials}, 
Proc. Edinburgh Math. Soc. {\bf 37} (1918), 2--25. 

\bibitem{Zhao} 
J. Zhao,  {\em  
Multiple $q$-zeta functions and multiple $q$-polylogarithms},   
Ramanujan J. {\bf 14}(2) (2007), 189--221. 

\bibitem{Zudilin}
W. Zudilin, {\em 
Algebraic relations for multiple zeta values} (Russian),  
Uspekhi Mat. Nauk {\bf 58}(1) (2003), 3--32; 
translation in Russian Math. Surveys {\bf 58}(1) (2003), 1--29.

\end{thebibliography}
\end{document}